\documentclass[12pt]{amsart}
\usepackage{amssymb,latexsym,amsmath,amscd,amsthm,graphicx, color}
\usepackage[all]{xy}
\usepackage{hyperref}
\usepackage{mathrsfs}
\definecolor{uuuuuu}{rgb}{0.26666666666666666,0.26666666666666666,0.26666666666666666}
\definecolor{xdxdff}{rgb}{0.49019607843137253,0.49019607843137253,1.}
\definecolor{ffqqqq}{rgb}{1.,0.,0.}

\raggedbottom

\pagestyle{empty}

\definecolor{uuuuuu}{rgb}{0.26666666666666666,0.26666666666666666,0.26666666666666666}
\definecolor{qqwuqq}{rgb}{0.,0.39215686274509803,0.}
\definecolor{zzttqq}{rgb}{0.6,0.2,0.}
\definecolor{xdxdff}{rgb}{0.49019607843137253,0.49019607843137253,1.}
\definecolor{qqqqff}{rgb}{0.,0.,1.}
\definecolor{cqcqcq}{rgb}{0.7529411764705882,0.7529411764705882,0.7529411764705882}
\definecolor{sqsqsq}{rgb}{0.12549019607843137,0.12549019607843137,0.12549019607843137}

\setlength{\oddsidemargin}{0 in} \setlength{\evensidemargin}{0 in}
\setlength{\textwidth}{6.75 in} \setlength{\topmargin}{-.6 in}
\setlength{\headheight}{.00 in} \setlength{\headsep}{.3 in }
\setlength{\textheight}{10 in} \setlength{\footskip}{0 in}

\theoremstyle{plain}

\newtheorem{theorem}[subsection]{Theorem}

\newtheorem{lemma}[subsection]{Lemma}

\newtheorem{prop}[subsection]{Proposition}

\theoremstyle{definition}
\newtheorem{remark}[subsection]{Remark}


\newcommand{\uu}{\cup}
\newcommand{\ii}{\cap}
\newcommand{\UU}{\bigcup}
\newcommand{\II}{\bigcap}

\newcommand{\sci}{\subset}
\newcommand{\es}{\emptyset}
\newcommand{\set}[1]{\{#1\}}


\newcommand{\ga}{\alpha}

\newcommand{\gd}{\delta}
\newcommand{\gh}{\eta}

\newcommand{\gk}{\kappa}

\newcommand{\gn}{\nu}
\newcommand{\go}{\omega}

\newcommand{\gs}{\sigma}
\newcommand{\gt}{\tau}

\newcommand{\gG}{\Gamma}

\newcommand{\gO}{\Omega}

\newcommand{\tbf}{\textbf}
\newcommand{\tit}{\textit}

\newcommand{\C}[1]{\mathcal{#1}}
\newcommand{\D}[1]{\mathbb{#1}}
\newcommand{\F}[1]{\mathfrak{#1}}

\newcommand{\te}{\text}

\newcommand{\ep}{\epsilon}

\newcommand{\ol}{\overline}
\newcommand{\ul}{\underline}

\newcommand{\nd}{\noindent}

\begin{document}

\noindent To appear, The Journal of Geometric Analysis

\title{Local dimensions and quantization dimensions in dynamical systems}


 \author{Mrinal Kanti Roychowdhury$^1$}
\address{$^1$School of Mathematical and Statistical Sciences\\
University of Texas Rio Grande Valley\\
1201 West University Drive\\
Edinburg, TX 78539-2999, USA.}
\email{mrinal.roychowdhury@utrgv.edu}

 \author{Bilel Selmi$^2$}
\address{$^2$
Analysis, Probability $\&$ Fractals Laboratory: LR18ES17\\
Department of Mathematics\\
Faculty of Sciences of Monastir\\
 University of Monastir \\
5000-Monastir, Tunisia.}
\email{bilel.selmi@fsm.rnu.tn}

\subjclass[2010]{Primary 37A50; Secondary 28A80, 94A34.}
\keywords{Hyperbolic recurrent IFS, irreducible row stochastic matrix,
 local dimension, Hausdorff dimension, packing dimension, quantization dimension}

\thanks{}
\date{}
\maketitle

\pagestyle{myheadings} \markboth{Mrinal Kanti Roychowdhury and Bilel Selmi}{Local dimensions and quantization dimensions in dynamical systems}

\begin{abstract}
Let $\mu$ be a Borel probability measure generated by a hyperbolic
recurrent iterated function system defined on a nonempty compact
subset of $\mathbb R^k$. We study the Hausdorff and the
packing dimensions, and the quantization dimensions of $\mu$ with
respect to the geometric mean error. The results
establish the connections with various dimensions of the measure
$\mu$, and generalize many known results about local dimensions and
quantization dimensions of measures.
\end{abstract}

\section{Introduction}
 Given a Borel probability measure $\mu$ on $\D R^k$, where $k\in \D N$, the $n$th quantization error for $\mu$ with respect to the geometric mean error is given by
\begin{equation} \label{eq00} e_n(\mu):=\inf \Big\{\exp \int \log d(x, \ga) d\mu(x) : \ga \sci \D R^k, \, 1\leq\te{card}(\ga) \leq n\Big\},\end{equation}
where $d(x, \ga)$ denotes the distance between $x$ and the set $\ga$ with respect to an arbitrary norm $d$ on $\D R^k$. A set $\ga$ for which the infimum is achieved and contains no more than $n$ points is called an \textit{optimal set of $n$-means} for $\mu$, and the collection of all optimal sets of $n$-means for $\mu$ is denoted by $\C C_n(\mu)$. Under some suitable restriction $e_n(\mu)$ tends to zero as $n$ tends to infinity. Following \cite{GL2}, we write
\begin{equation*} \hat e_n:=\hat e_n(\mu)=\log e_n(\mu)=\inf \Big\{\int \log d(x, \ga) d\mu(x) : \ga \sci \D R^k, \, 1\leq\te{card}(\ga) \leq n\Big\}.\end{equation*}
The numbers
\[\ul D(\mu):=\liminf_{n \to \infty}  \frac{\log n}{-\hat e_n(\mu)} \te{ and } \ol D(\mu):=\limsup_{n\to \infty} \frac{\log n}{-\hat e_n(\mu)},\]
are called the \tit{lower} and the \tit{upper quantization dimensions} of $\mu$ (of order zero), respectively. If $\ul D (\mu)=\ol D (\mu)$, the common value is called the \tit{quantization dimension} of $\mu$ and is denoted by $D(\mu)$. The quantization dimension measures the speed at which the specified measure of the error tends to zero as $n$ tends to infinity. This problem arises in signal processing, data compression, cluster analysis, and pattern recognition, and it also has been studied in the context of economics, statistics, and numerical integration (see \cite{BW, GG, GN, P, Z}). The quantization dimension with respect to the geometric mean error can be regarded as a limit state of that based on $L_r$-metrics as $r$ tends to zero (see \cite[Lemma 3.5]{GL2}). The following proposition gives a characterization of the lower and the upper quantization dimensions.
\begin{prop} (see \cite[Proposition 4.3]{GL2}) \label{prop1}
Let $\ul D=\ul D(\mu)$ and $\ol D=\ol D(\mu)$.

$(a)$ If $0\leq t<\ul D<s$, then
\[\lim_{n\to \infty} (\log n +t\hat e_n(\mu))=+\infty \te{ and } \liminf_{n\to \infty} (\log n+s\hat e_n(\mu)) =-\infty.\]

$(b)$ If $0\leq t<\ol D<s$, then
\[\limsup_{n\to \infty} (\log n +t\hat e_n(\mu))=+\infty \te{ and } \lim_{n\to \infty} (\log n+s\hat e_n(\mu)) =-\infty.\]
\end{prop}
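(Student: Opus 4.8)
The plan is to deduce all four assertions from elementary properties of $\liminf$ and $\limsup$, using the fact that $\hat e_n(\mu)\to-\infty$ as $n\to\infty$ (equivalently $e_n(\mu)\to 0$, which holds in our setting since $\mu$ has compact support); in particular $-\hat e_n(\mu)>0$ for all large $n$, so one may freely divide by it. The first step is to restate the hypotheses in a usable form. If $t<\underline D$, fix $t'$ with $t<t'<\underline D$: by the definition of $\liminf$ there is $N$ such that $\log n/(-\hat e_n(\mu))>t'$, i.e. $\log n+t'\hat e_n(\mu)>0$, for all $n\ge N$. If instead $s>\underline D$, fix $s'$ with $\underline D<s'<s$: then $\log n/(-\hat e_n(\mu))<s'$, i.e. $\log n+s'\hat e_n(\mu)<0$, for infinitely many $n$. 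Replacing $\liminf$ by $\limsup$ simply exchanges ``for all large $n$'' with ``for infinitely many $n$'', which gives the corresponding two statements relative to $\overline D$.

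The second step is a single interpolation identity, applied four times:
\[
\log n+t\,\hat e_n(\mu)=\bigl(\log n+t'\,\hat e_n(\mu)\bigr)+(t-t')\,\hat e_n(\mu),
\]
and likewise with $(s,s')$ in place of $(t,t')$. For part (a): in the first claim the bracket is nonnegative for $n\ge N$ while $(t-t')\hat e_n(\mu)\to+\infty$ since $t-t'<0$ and $\hat e_n(\mu)\to-\infty$, so the whole expression tends to $+\infty$; in the second claim, along the subsequence on which $\log n+s'\hat e_n(\mu)<0$ we get $\log n+s\,\hat e_n(\mu)<(s-s')\hat e_n(\mu)\to-\infty$ (now $s-s'>0$), which forces $\liminf_n(\log n+s\hat e_n(\mu))=-\infty$. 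Part (b) uses exactly the same two computations, but with the roles of the two translated hypotheses swapped: the ``infinitely often'' statement feeds the first computation to yield $\limsup_n(\log n+t\hat e_n(\mu))=+\infty$, and the ``for all large $n$'' statement feeds the second to yield $\lim_n(\log n+s\hat e_n(\mu))=-\infty$.

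I do not expect a genuine obstacle here: this is a routine ``$\varepsilon$ of room'' estimate. The only two points that require care are that $\hat e_n(\mu)$ is \emph{negative}, so multiplying an inequality by it reverses its direction, and that one must match each conclusion to the correct mode of convergence --- a genuine two-sided limit needs the estimate for all large $n$, whereas a one-sided value $\pm\infty$ for a $\liminf$ or $\limsup$ only needs it along a suitable subsequence. It is also worth recording explicitly that the standing hypothesis $e_n(\mu)\to0$ is precisely what makes $\underline D(\mu)$ and $\overline D(\mu)$ meaningful, and is the only input beyond pure real analysis.
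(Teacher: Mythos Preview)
The paper does not actually prove this proposition: it is quoted verbatim from \cite[Proposition~4.3]{GL2} and used as a black box in the proof of Theorem~\ref{th2}, so there is no in-paper argument to compare against. Your argument is correct and is essentially the standard one. One small remark: you invoke $\hat e_n(\mu)\to-\infty$ as an external fact about compactly supported measures, but in fact it is already forced by the hypotheses of the proposition itself --- since $n\mapsto e_n(\mu)$ is nonincreasing, $-\hat e_n(\mu)$ is nondecreasing, and the assumption $\underline D(\mu)<s<\infty$ (resp.\ $\overline D(\mu)<s<\infty$) yields a subsequence with $-\hat e_n(\mu)>\tfrac{1}{s}\log n\to\infty$, hence $-\hat e_n(\mu)\to\infty$ along the full sequence. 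With that observation your proof is self-contained.
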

For any $\gk>0$, the two numbers $\liminf\limits_{n\to \infty}  n^{1/\gk} e_n(\mu)$ and  $\limsup\limits_{n\to \infty}  n^{1/\gk} e_n(\mu)$ are, respectively, called the \tit{$\gk$-dimensional lower} and  the \tit{upper quantization coefficients} for $\mu$ with respect to the geometric mean error. For every $x\in \D R^k$, the \tit{lower} and the \tit{upper local dimensions} of the measure $\mu$ at $x$ are defined, respectively, by
\[\ul d_\mu(x)=\liminf_{r\to 0} \frac{\log \mu(B(x, r))}{\log r}\te{ and } \ol d_\mu(x)=\limsup_{r\to 0} \frac{\log \mu(B(x, r))}{\log r},\]
where $B(x, r)$ is the ball of radius $r$ centered at $x$. We say that the local dimension exists at $x$ if $\ul d_\mu(x)$ and $\ol d_\mu(x)$ are equal, and write $d_\mu(x)$ for the common value. These local dimensions, also known as pointwise dimensions, describe the power law behavior of $\mu(B(x, r))$ for small $r$, with $d_\mu(x)$ small if $\mu$ is `highly concentrated' near $x$. Notice that $d_\mu(x)=\infty$ if $x$ is outside the support of $\mu$ and $d_\mu(x)=0$ if $x$ is an atom of $\mu$. The \tit{lower} and the \tit{upper Hausdorff dimensions} of $\mu$ are defined, respectively, by
\begin{align*} \te{dim}_\ast \mu & =\inf \{\te{dim}_{\te{H}} E : E \te{ is Borel with } \mu(E)>0\}, \te{ and } \\
 \te{dim}^\ast \mu& =\inf \{\te{dim}_{\te{H}} E : E \te{ is Borel with } \mu(E)=1\}.
\end{align*}
Analogously, we define the \tit{lower} and the \tit{upper packing dimensions} of $\mu$, respectively, by
\begin{align*} \te{Dim}_\ast \mu & =\inf \{\te{dim}_{\te{P}} E : E \te{ is Borel with } \mu(E)>0\}, \te{ and } \\
 \te{Dim}^\ast \mu& =\inf \set{\te{dim}_{\te{P}} E : E \te{ is Borel with } \mu(E)=1}.
\end{align*}
Clearly, $\te{dim}_\ast \mu \leq \te{dim}^\ast \mu$ and $\te{Dim}_\ast \mu \leq \te{Dim}^\ast \mu$. When the equalities $\te{dim}_\ast \mu = \te{dim}^\ast \mu$ and $\te{Dim}_\ast \mu =\te{Dim}^\ast \mu$ are satisfied, we denote by $\te{dim}_{\te{H}} \mu$ and $\te{dim}_{\te{P}} \mu$, respectively, the Hausdorff and the packing dimensions of the measure $\mu$. Hausdorff dimension and packing dimension of a measure are closely related to lower local dimension and upper local dimension of the measure. More precisely,
\begin{equation} \label{eq234}\te{dim}_\ast(\mu) =\sup\set{s : \ul d_\mu(x) \geq s \te{ for } \mu\te{-a.e. } x}, \quad  \te{dim}^\ast(\mu) =\inf\set{s : \ul d_\mu(x) \leq s \te{ for } \mu\te{-a.e. } x},\end{equation}
and
\begin{equation} \label{eq235}\te{Dim}_\ast(\mu) =\sup\set{s : \ol d_\mu(x) \geq s \te{ for } \mu\te{-a.e. } x}, \quad  \te{Dim}^\ast(\mu) =\inf\set{s : \ol d_\mu(x) \leq s \te{ for } \mu\te{-a.e. } x}.\end{equation}
Hence, for $\mu$-a.e. $x\in \D R^k$, it follows that
\begin{equation*} 0\leq \te{dim}_\ast(\mu) \leq \ul d_\mu(x)\leq \te{dim}^\ast(\mu)\leq d, \te{ and } 0\leq \te{Dim}_\ast(\mu) \leq \ol d_\mu(x)\leq \te{Dim}^\ast(\mu)\leq d.\end{equation*}
If $\ul d_\mu(x)$ and $\ol d_\mu(x)$ are both constants for $\mu$-a.e. $x$, then we say that $\mu$ is `exact-dimensional' or `unidimensional'. We say that a measure $\mu$ has exact lower dimension $s$ if $\ul d_\mu(x)=s$ for $\mu$-a.e. $x$, and exact upper dimension $s$ if $\ol d_\mu(x)=s$ for $\mu$-a.e. $x$. Thus, from \eqref{eq234} and \eqref{eq235}, it follows that $\mu$ has exact lower dimension $s$ if and only if
\begin{equation} \label{eq23411}
\te{dim}_{\te{H}}(\mu)=\te{dim}_\ast(\mu)=\te{dim}^\ast(\mu)=s,
\end{equation}
and $\mu$ has exact upper dimension $s$ if and only if
\begin{equation} \label{eq23422}
\te{dim}_{\te{P}}(\mu)=\te{Dim}_\ast(\mu)=\te{Dim}^\ast(\mu)=s.
\end{equation}

For more details about the relationships between the different dimensions of measures, one is referred to \cite{F, P1,T1, Y}, and the references therein.

Let $P=[p_{ij}]_{1\leq i, j\leq N}$ be an $N\times N$ irreducible
row stochastic matrix, and $\set{S_{i} : 1\leq i\leq N}$ be a system
of contractive hyperbolic maps defined on a nonempty compact metric
space $X\sci \D R^k$ such that $\ul s_{i} d(x, y) \leq d(S_{i}(x),
S_{i}(y)) \leq \ol s_{i} d(x, y)$ for all $x, y\in X$, where $0<\ul
s_{i} \leq \ol s_{i} <1$, $1\leq i\leq N$. Then,  the collection
$\set{X;  S_{i}, p_{ij}  : 1\leq i, j\leq N}$ is called a
\tit{hyperbolic recurrent iterated function system} (hyperbolic
recurrent IFS) (see \cite{BEH}). Since $P$ is irreducible it follows
that (see \cite{F1}) there is a unique probability vector $p=(p_1,
p_2, \cdots, p_N)$ with $p_i>0$ for all $1\leq i\leq N$ such that
\[\sum_{i=1}^N p_ip_{ij}=p_j.\]
 Then, there exist unique nonempty compact sets $E_1, E_2, \cdots, E_N$ satisfying
\begin{equation} \label{eq41} E_i=\UU_{\{j: p_{ji}>0\}} S_{i}(E_j)
 \end{equation}
 for all $1\leq i\leq N$. Let $E=\uu_{i=1}^NE_i$. Then, by Caratheodory's extension theorem, it can be proved that there exists a unique Borel probability measure $\mu$ on $\D R^k$ with support $E$ such that $\mu$ satisfies:
\[\mu=\sum_{i=1}^N\sum_{j=1}^N p_{ji} \mu_j\circ S_{i}^{-1},\]
where $\mu_j:=\mu|_{E_j}$, i.e., $\mu_j$ is the restriction of $\mu$ on $E_j$, i.e., for any Borel $B \sci \D R^d$, we have $\mu_j(B) =\mu(B\ii E_j)$ for all $1\leq j\leq N$, (for some details, please see \cite{BEH}).
We say that the hyperbolic recurrent IFS satisfies the \tit{open set condition} (OSC) if there exist bounded nonempty open sets $U_1, U_2, \cdots, U_N$ with the property that
\[\UU_{\{j: p_{ji}>0\}} S_{i} (U_j) \sci U_i \te { and } S_{i}(U_j) \II S_{i}(U_k) =\es \te{ for } j \neq k \te{ with } p_{ji} p_{ki} >0.\]
The hyperbolic recurrent IFS satisfies the \tit{strong separation condition} (SSC) if
\[d\big(S_{i}(E_k), S_{j}(E_\ell)\big)>0 \te{ for all } ki\neq \ell j \te{ with } p_{ki} p_{\ell j} >0, \te{ where } 1\leq i, j, k, \ell \leq N.\]
It is a well-known fact that an hyperbolic recurrent IFS satisfies the open set condition if it
satisfies the strong separation condition.
In this paper, under the open set condition in Theorem~\ref{th1}, we have proved that for $\mu$-a.e. $x\in \D R^k$,
\begin{equation} \label{eq236}
\frac{\sum_{i=1}^N \sum_{j=1}^N p_i p_{ij} \log
p_{ij}}{\sum_{i=1}^Np_i  \log \ul s_{i}}\leq\ul d_\mu(x)\leq \ol
d_\mu(x) \leq\frac{\sum_{i=1}^N \sum_{j=1}^N p_i p_{ij} \log
p_{ij}}{\sum_{i=1}^Np_i  \log \ol s_{i}}.
\end{equation}
Thus, by \eqref{eq234},  \eqref{eq235}, and \eqref{eq236}, we have
\begin{equation} \label{eq89}  \frac{\sum_{i=1}^N \sum_{j=1}^N p_i p_{ij} \log
p_{ij}}{\sum_{i=1}^Np_i  \log \ul
s_{i}}\leq\te{dim}_\ast(\mu)\leq\ul d_\mu(x)\leq \ol
d_\mu(x) \leq\te{Dim}^\ast(\mu)\leq
\frac{\sum_{i=1}^N \sum_{j=1}^N p_i p_{ij} \log
p_{ij}}{\sum_{i=1}^Np_i  \log \ol s_{i}}.\end{equation}

The following theorem is known.

\begin{theorem} (see \cite[Theorem~2.1]{Z2}) \label{the90}
Let $\mu$ be a compactly supported probability measure on $\D R^k$.
Assume that there exist constants $C>0$ and $\gh>0$ such that
$\mu(B(x, \ep))\leq C \ep^\gh\te{ for every } x \in \D R^k \te{ and
all } \ep>0$. Then $\te{dim}_\ast(\mu) \leq \ul D(\mu)\leq \ol
D(\mu)\leq \te{Dim}^\ast(\mu).$
\end{theorem}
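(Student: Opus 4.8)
The plan is to establish the two one‑sided bounds $\te{dim}_\ast(\mu)\le\ul D(\mu)$ and $\ol D(\mu)\le\te{Dim}^\ast(\mu)$ separately and combine them with the trivial $\ul D(\mu)\le\ol D(\mu)$. Two preliminary observations carry all the analytic weight of the standing hypothesis $\mu(B(x,\ep))\le C\ep^\gh$. First, for a finite set $\ga\sci\D R^k$ and any $r>0$, covering $\{x:d(x,\ga)<r\}$ by the balls $B(a,r)$, $a\in\ga$, gives the codebook‑independent estimate $\mu(\{x:d(x,\ga)<r\})\le (\te{card}\,\ga)\,C\,r^\gh$. Second, a layer‑cake computation with $r=e^{-t}$ shows $\int_{B(a,1)}|\log d(x,a)|\,d\mu(x)\le\int_0^\infty\min(1,Ce^{-\gh t})\,dt<\infty$, so $\log d(\cdot,\ga)\in L^1(\mu)$ for every finite $\ga$; hence each $\hat e_n(\mu)$ is a real number, $\mu$ is atomless, and $\ul d_\mu(x)\ge\gh>0$ everywhere. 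I will also use throughout that in the infimum defining $\hat e_n(\mu)$ one may restrict the codebooks to lie in the compact set $K$ obtained as the closed convex hull of $\te{supp}\,\mu$, since projecting a point of a codebook onto $K$ never increases $d(x,\ga)$ for $x\in\te{supp}\,\mu$; write $L$ for the diameter of $K$, so $d(x,\ga)\le L$ for $\mu$-a.e.\ $x$.

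For $\te{dim}_\ast(\mu)\le\ul D(\mu)$, by \eqref{eq234} it is enough to fix $t'>0$ with $\ul d_\mu>t'$ $\mu$-a.e.\ and show $\ul D(\mu)\ge t'$. Put $A_\gd=\{x:\mu(B(x,r))\le r^{t'}\text{ for all }0<r\le\gd\}$; these Borel sets increase, as $\gd\downarrow0$, to a set of full $\mu$-measure, so $\gb:=1-\mu(A_\gd)\to0$. If $B(a,r)$ meets $A_\gd$ in a point $y$ with $2r\le\gd$, then $B(a,r)\ci B(y,2r)$, hence $\mu(B(a,r)\ii A_\gd)\le(2r)^{t'}$, and summing over $a\in\ga$ gives $\mu(A_\gd\ii\{x:d(x,\ga)<r\})\le(\te{card}\,\ga)(2r)^{t'}$ for $r\le\gd/2$. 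Now for any codebook $\ga\sci K$ with $\te{card}\,\ga\le n$, I would start from the identity
\[\int\log d(x,\ga)\,d\mu(x)=\log L-\int_0^L\mu(\{x:d(x,\ga)<r\})\,\frac{dr}{r},\]
bound $\mu(\{x:d(x,\ga)<r\})\le\min(1,n(2r)^{t'})+\min(\gb,nCr^\gh)$ on $(0,\gd/2]$ and by $1$ on $[\gd/2,L]$, and integrate each minimum against $dr/r$ by splitting at its crossover radius, obtaining
\[\int_0^L\mu(\{x:d(x,\ga)<r\})\,\frac{dr}{r}\le\Big(\frac{1}{t'}+\frac{\gb}{\gh}\Big)\log n+O_{\gd,\gb}(1).\]
Since this is uniform in $\ga$, $\hat e_n(\mu)\ge-\big(\frac{1}{t'}+\frac{\gb}{\gh}\big)\log n-O_{\gd,\gb}(1)$, whence $\ul D(\mu)\ge\big(\frac{1}{t'}+\frac{\gb}{\gh}\big)^{-1}$; letting $\gd\downarrow0$ (so $\gb\to0$) gives $\ul D(\mu)\ge t'$, and then $t'\uparrow\te{dim}_\ast(\mu)$ finishes this bound.

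For $\ol D(\mu)\le\te{Dim}^\ast(\mu)$, by \eqref{eq235} it is enough to fix $s'>0$ with $\ol d_\mu<s'$ $\mu$-a.e.\ and show $\ol D(\mu)\le s'$. Put $A_\gd=\{x:\mu(B(x,r))\ge r^{s'}\text{ for all }0<r\le\gd\}$, so $\mu(A_\gd)\uparrow1$ as $\gd\downarrow0$. For small $\rho\le\gd$ pick a maximal $2\rho$-separated subset $\{x_1,\dots,x_m\}$ of $A_\gd$: the balls $B(x_i,\rho)$ are pairwise disjoint and each has measure $\ge\rho^{s'}$, so $m\le\rho^{-s'}$, while maximality forces $d(x,\{x_1,\dots,x_m\})\le2\rho$ for every $x\in A_\gd$. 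Using this set as a codebook and splitting the integral over $A_\gd$ and its complement,
\[\hat e_m(\mu)\le\int\log d(x,\{x_1,\dots,x_m\})\,d\mu(x)\le\mu(A_\gd)\log(2\rho)+(1-\mu(A_\gd))\log L.\]
Given $n$ large, take $\rho=n^{-1/s'}$; then $m\le n$, so $\hat e_n(\mu)\le\hat e_m(\mu)$ by monotonicity, giving $\hat e_n(\mu)\le\mu(A_\gd)\big(\log2-\frac{1}{s'}\log n\big)+(1-\mu(A_\gd))\log L$, and hence $\ol D(\mu)=\limsup_n\frac{\log n}{-\hat e_n(\mu)}\le\frac{s'}{\mu(A_\gd)}$. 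Letting $\gd\downarrow0$ and then $s'\downarrow\te{Dim}^\ast(\mu)$ completes the proof.

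The only delicate point is the lower‑bound step. The upper‑bound step uses nothing but compactness and the local lower bound for $\mu$ on $A_\gd$; but in the lower‑bound step, over the vanishingly small radii $r\to0$ the only information about $\mu$ on the complement of $A_\gd$ is precisely what the uniform bound $\mu(B(x,\ep))\le C\ep^\gh$ supplies, namely $\mu(A_\gd^c\ii\{x:d(x,\ga)<r\})\le\min(\gb,nCr^\gh)$. Without it that part of the layer‑cake integral would diverge near $0$ (or grow like $\log n$ with a constant not tending to $1/t'$), and the whole scheme would collapse. The remaining items — Borel measurability of the $A_\gd$ (restrict to rational radii and use left‑continuity of $r\mapsto\mu(B(x,r))$), the reduction to codebooks inside $K$, and the elementary integration of the two minima at their crossover radii — are routine.
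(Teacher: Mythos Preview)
The paper does not prove Theorem~\ref{the90}; it is stated there as a known result from \cite{Z2} and then used as a black box to obtain \eqref{eq237}. There is therefore no proof in the paper to compare your proposal against.

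On its own merits your argument is correct. The layer-cake identity $\int\log d(x,\ga)\,d\mu=\log L-\int_0^L\mu(\{d(\cdot,\ga)<r\})\,dr/r$ together with the two-piece bound $\mu(\{d(\cdot,\ga)<r\})\le\min(1,n(2r)^{t'})+\min(\gb,nCr^{\gh})$ on $(0,\gd/2]$ gives, after integrating each minimum across its crossover radius, exactly $-\hat e_n(\mu)\le(1/t'+\gb/\gh)\log n+O_{\gd,\gb}(1)$, hence $\ul D(\mu)\ge(1/t'+\gb/\gh)^{-1}\to t'$ as $\gd\downarrow0$. For the upper inequality, the maximal $2\rho$-separated subset of $A_\gd$ has cardinality $m\le\rho^{-s'}$ and covers $A_\gd$ at scale $2\rho$, so taking $\rho=n^{-1/s'}$ yields $\hat e_n(\mu)\le\mu(A_\gd)\log(2\rho)+(1-\mu(A_\gd))\log L$ and $\ol D(\mu)\le s'/\mu(A_\gd)\to s'$. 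You have also correctly identified where the standing hypothesis $\mu(B(x,\ep))\le C\ep^{\gh}$ enters: it guarantees $\hat e_n(\mu)>-\infty$ and, more importantly, prevents the $A_\gd^c$ contribution in the lower-bound layer cake from producing an uncontrolled $\log n$ term. The measurability of $A_\gd$ via rational radii and left-continuity of $r\mapsto\mu(B(x,r))$ for open balls, and the reduction of codebooks to the convex hull $K$, are routine as you say.
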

In Lemma~\ref{lemma00}, we have proved that the condition $\mu(B(x, \ep))\leq C \ep^\gh\te{ for every } x \in \D R^k$ given in Theorem~\ref{the90} also holds for hyperbolic recurrent IFS considered in this paper. Thus, by \eqref{eq89} and Theorem~\ref{the90}, we see that
\begin{equation}\label{eq237} \frac{\sum_{i=1}^N \sum_{j=1}^N p_i p_{ij} \log p_{ij}}{\sum_{i=1}^Np_i  \log \ul s_{i}}\leq \ul D(\mu) \leq \ol D(\mu)\leq \frac{\sum_{i=1}^N \sum_{j=1}^Np_i p_{ij} \log p_{ij}}{\sum_{i=1}^N  p_i \log \ol s_{i}}.\end{equation}
Under the strong separation condition in Theorem~\ref{th2}, we give
an independent proof of \eqref{eq237}.
Notice that if we assume that  $\ul s_{i}=\ol s_{i}=s_i$ and
$p_{ij}=p_j$ for all $1\leq i, j\leq N$, then \eqref{eq236} reduces
to
\[\ul d_\mu(x)=\ol d_\mu(x)=\frac{\sum_{i=1}^N p_i\log p_i}{\sum_{i=1}^N p_i\log s_i},\]
which is the result of Geronimo and Hardin in \cite{GH}, and \eqref{eq237} reduces to
\[\ul D(\mu) = \ol D(\mu)=\frac{\sum_{i=1}^N p_i\log p_i}{\sum_{i=1}^N p_i\log s_i},\]
which is the result of Graf-Luschgy in \cite{GL2}. In addition, Theorem~\ref{th1} generalizes a similar result of Deliu et al. in \cite{DGSH}, and Theorem~\ref{th2} generalizes a similar result of Roychowdhury et al. in \cite{RS} for recurrent self-similar measures. Thus, the results in this paper establish the connections with various dimensions of the measure $\mu$, and generalize many known results about local dimensions and quantization dimensions of measures.

\section{Basic definitions and results}
Let $X$ be a nonempty compact subset of the metric
space $\D R^k$ equipped with the metric $d$ inherited from $\D R^k$, such that $X=\te{cl(int}X)$. Let $N\geq 2$, and  $P=[p_{ij}]_{1\leq i, j\leq N}$ be an $N\times N$ irreducible row stochastic matrix, in other words, $p_{ij}\geq0$ for all $1\leq i,j\leq N$, $\sum_{j=1}^{N} p_{ij}=1$ for all $i$, and for each pair $1\leq i,j\leq N$ there exists a finite sequence of indicators
$i_1=i, i_2, \ldots, i_n=j$ with $i_1=i$ and $ i_n=j$ such that
\[p_{i_1i_2} p_{i_2i_3 } \cdots p_{i_{n-1}i_{n}}>0.\]
Let $\set{S_{i} : 1\leq i\leq N}$ be a system of contractive hyperbolic maps on $X$ as defined in the previous section.
Define $\gO$ and $T : \gO \to \gO$ by
\[\gO=\set{\go=(\go_i)_{i=1}^\infty : 1\leq \go_i\leq N, \, p_{\go_{i+1}\go_{i}}>0 \te{ for } i=1, 2, \cdots}\]
and
\[T : \gO  \ni (\go_1, \go_2, \go_3, \cdots) \mapsto (\go_2, \go_3, \cdots) \in \gO.\]
Let $\tilde d : \gO \times \gO \to \D R$ be defined by
\[\tilde d(\go, \gt) =2^{-n} \te{ if and only if } n=\min\set{m : \go_m\neq \gt_m}\]
for $\go=(\go_1, \go_2, \cdots), \, \gt=(\gt_1, \gt_2,  \cdots) \in \gO$. Then,  $\tilde d$ is a metric on $\gO$. With this metric $\gO$ becomes a compact metric space, and $T$ is called the left shift map on $\gO$.

For $n\geq 2$, let $W_n$ denote the set of all $n$-tuples $(\go_1,
\go_2,  \cdots, \go_{n})$ (called words of length $n$), which are
admissible with respect to $\gO$, i.e., there exists a sequence
$(\gt_1, \gt_2,  \cdots) \in \gO$ such that $\gt_1=\go_1, \gt_2=\go_2,
\cdots, \gt_n=\go_n$. Set $W=\UU_{n\geq 2} W_n$. If $\go=(\go_1, \go_2,
\cdots, \go_n)\in W$, then the set $\set{\gt \in \gO : \gt_i=\go_i,
1\leq i\leq n}$ is called a cylinder in $\gO$ of length $n$
generated by the word $\go$, and is denoted by $[\go]$. A cylinder of length zero is called the
empty cylinder. The set of all sequences in $\gO$ starting with the
symbol $i$ forms a cylinder of length 1, and is denoted by $[i]$. For $\go=(\go_1, \go_2, \cdots )
\in W \uu \gO$, if $n$ does not exceed the length of $\go,$ by
$\go|_n$ we mean $\go|_n=(\go_1, \go_2, \cdots, \go_n)$ and
$\go|_0=\es$, $\go$ is called an extension of $\gt \in W$ if
$\go|_{|\gt|}=\gt$, where $|\gt|$ represents the length of $\gt$.
For $\go =(\go_1, \go_2, \cdots, \go_n)$ and $\gt=(\gt_1, \gt_2,
\cdots, \gt_p)$ in $W,$ if $p_{\gt_1\go_n}>0$ by $\go \gt$ we mean
$\go\gt=(\go_1, \go_2, \cdots, \go_n, \gt_1, \cdots, \gt_p)$. For
$\go=(\go_1, \go_2, \cdots,  \go_n) \in W$, $n\geq 2$, let us write
\begin{align*}
S_\go &=S_{\go_1}\circ S_{\go_2}\circ \cdots \circ S_{\go_{n-1}}, \
p_\go  =p_{\go_n} p_{\go_{n}\go_{n-1}}\cdots  p_{\go_2\go_{1}},\\
P_\go&=p_{\go_{n}\go_{n-1}}\cdots  p_{\go_2\go_{1}}, \te{ and }
E_\go=S_\go(E_{\go_n}).
\end{align*}
From \eqref{eq41} it follows that the limit set of the recurrent hyperbolic IFS satisfies the following invariance equality (see \cite{BEH}):
\begin{equation} \label{eq511} E=\UU_{\go\in W_n}S_\go (E_{\go_n}) \te{ for } n\geq 2.\end{equation}
Let $\F B$ be the Borel sigma-algebra generated by the cylinders in $\gO$. For $\go=(\go_1, \go_2, \cdots,  \go_n) \in W$, define
$\gn[\go]=p_\go.$
Then, by the Caratheodory's extension theorem, $\gn$ can be extended to a unique Borel probability measure, which is also identified as $\gn$, on $(\gO, \F B)$. Clearly supp($\gn)=\gO$ and $T$ is an ergodic transformation on $(\gO, \F B, \gn)$. The transformation $T$ is called the \tit{Markov shift} with respect to the transition matrix $P$ and stationary distribution $p$, and $\gn$ is called the ergodic Markov measure on $(\gO, \F B)$. Since given $\go=(\go_1, \go_2,  \cdots ) \in \gO$ the diameters of
the compact sets $S_{\go|_n}(E_{w_n})$ converge to zero and since
they form a descending family, the set
\[\II_{n=2}^\infty S_{\go|_n} (E_{\go_n})\]
is a singleton and therefore, denoting its element by $\pi(\go)$, defines a coding map $\pi : \gO \to E$. Notice that the coding map $\pi$ is a continuous surjection.
Let $\mu$ be the image measure of the probability measure $\gn$ under the coding map $\pi$ on the limit set $E$, i.e., $\mu=\gn\circ \pi^{-1}$. Define
\[\pi_i:=\pi|_{[i]} \te{ and } \mu_i:=\mu|_{E_i},\]
i.e., $\pi_i$ is the restriction of $\pi$ on $[i]$, and $\mu_i$ is the restriction of $\mu$ on $E_i$. Thus, for $1\leq i\leq N$, we have $\mu_i=\gn \circ \pi_i^{-1}$, and under the strong separation condition, as defined in the previous section, we have
\[\mu_i=\sum_{j=1}^N p_{ji} \mu_j\circ S_{i}^{-1}, \te{ and }  \mu=\sum_{i=1}^N\mu_i.\]
In fact, for any $1\leq i\leq N$,
\[\mu_i(E_i)=\sum_{j=1}^N p_{ji} \mu_j\circ S_{i}^{-1}(E_i)=\sum_{j=1}^N p_{ji} \mu_j(E_j)=\sum_{j=1}^N  p_{ji}\Big(\gn\circ \pi_j^{-1}\Big)(E_j)=\sum_{j=1}^N  p_{ji}\gn[j],\]
which implies $\mu_i(E_i)=\sum_{j=1}^N  p_jp_{ji}=p_i$, and
\[\mu(E)=\sum_{i=1}^N \mu_i\Big(\bigcup_{k=1}^N E_k\Big)=\sum_{i=1}^N \mu_i(E_i)=\sum_{i=1}^N p_i=1.\]

 We call $\gG \sci W$ a \tit{finite maximal antichain} if $\gG$ is a
finite set of words in $W$, such that every sequence in $\gO$ is an
extension of some word in $\gG$, but no word of $\gG$ is an
extension of another word in $\gG$. Notice that as all words of $W$
are of length at least two, for any $\go\in \gG$, we have $|\go|\geq
2$.

We now prove the following lemma assuming the open set condition.
\begin{lemma} \label{lemma00}
Let $\mu$ be the Borel Probability measure on $\D R^k$ supported by the compact set generated by the hyperbolic recurrent IFS satisfying the open set condition. Then, there exist constants $C>0$ and $\gh>0$ such that
\begin{equation} \label{eq1} \mu(B(x, \ep))\leq C \ep^\gh\te{ for every } x \in \D R^k \te{ and all } \ep>0.\end{equation}
As a consequence, $e_n(\mu)<e_{n-1}(\mu)$ $(e_0(\mu):=\infty)$ and $\C C_n(\mu)\neq \es$ for every $n\in \D N$.
\end{lemma}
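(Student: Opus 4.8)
The plan is to transfer the estimate to the symbolic space $(\gO,\F B,\gn)$, where $\mu=\gn\circ\pi^{-1}$, to prove \eqref{eq1} by a Moran-type covering, and then to deduce the two consequences from the general quantization theory of \cite{GL2}.

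\emph{Step 1: the covering.} I would set $\ul s=\min_i\ul s_i$, $\ol s=\max_i\ol s_i$, and $D_0=\max_i\te{diam}(E_i)$; the framework excludes $E$ from being finite, so $\te{diam}(E_i)>0$ for every $i$. Fix $\ep>0$ and build $\gG_\ep\sci W$ by descending the tree of admissible words and stopping along each branch at the first word $\go$ of length $n$ with $\ol s_{\go_1}\cdots\ol s_{\go_{n-1}}D_0\le\ep$. Since the diameters of the nested sets $S_{\go|_n}(E_{\go_n})$ shrink to $0$, $\gG_\ep$ is a finite maximal antichain; every $\go\in\gG_\ep$ has $\te{diam}(E_\go)\le\ep$, and the stopping rule forces $|\go|\asymp\log(1/\ep)$, in particular $|\go|-1\ge\log(1/\ep)/\log(1/\ul s)-C_1$ for a constant $C_1$. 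Because $\set{[\go]:\go\in\gG_\ep}$ partitions $\gO$ and $\pi([\go])\sci E_\go$, every sequence coding a point of $B(x,\ep)$ lies in some $[\go]$ with $E_\go\ii B(x,\ep)\neq\es$, so
\[\mu(B(x,\ep))=\gn\big(\pi^{-1}(B(x,\ep))\big)\le\sum_{\go\in\gG_\ep,\ E_\go\ii B(x,\ep)\neq\es}\gn[\go]=\sum_{\go\in\gG_\ep,\ E_\go\ii B(x,\ep)\neq\es}p_\go.\]

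\emph{Step 2: each piece is small.} Since $P$ is irreducible and not a single deterministic cycle, any run of forced ($p_{ij}=1$) transitions inside an admissible word has length $<N$ — otherwise it revisits a state, producing an absorbing cycle and contradicting irreducibility — so a word of length $n$ carries at least $\lfloor(n-1)/N\rfloor$ transitions of probability $\le\ol p:=\max\setm{p_{ij}}{0<p_{ij}<1}<1$. Hence, for $\go\in\gG_\ep$,
\[p_\go\le P_\go\le\ol p^{\,\lfloor(|\go|-1)/N\rfloor}\le C_0\,\ep^{\gh},\qquad \gh:=\frac{\log(1/\ol p)}{N\log(1/\ul s)}>0.\]

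\emph{Step 3: few pieces meet the ball, and the consequences.} Here I would invoke the open set condition: for each fixed first symbol $i$, the sets $S_\go(U_{\go_n})$ with $\go\in\gG_\ep$, $\go_1=i$, are pairwise disjoint (apply the disjointness clause of the OSC at the first coordinate in which two such words differ, using injectivity of their common prefix of maps), and, comparing their inradii and diameters with $\ep$ through $\ul s_j,\ol s_j,\te{inrad}(U_j),\te{diam}(U_j)$, a volume–packing argument in $\D R^k$ caps by a constant $M=M(k,\te{IFS})$ the number of $\go\in\gG_\ep$ with $E_\go\ii B(x,\ep)\neq\es$. Combining Steps 1–3, $\mu(B(x,\ep))\le M\,C_0\,\ep^{\gh}$; enlarging the constant to absorb $\ep$ near or above $D_0$ gives \eqref{eq1} with $C=M\,C_0$. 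Finally \eqref{eq1} makes $\mu$ non-atomic and gives $\int\log d(x,\ga)\,d\mu(x)>-\infty$ for every finite set $\ga$, whence $e_n(\mu)<e_{n-1}(\mu)$ and $\C C_n(\mu)\neq\es$ follow from the general existence and monotonicity results of \cite{GL2}.

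\emph{Main obstacle.} The delicate point is Step 3 in the genuinely non-conformal regime $\ul s_i<\ol s_i$: cylinders can be badly distorted, so no antichain has all its pieces comparable to round balls of radius $\approx\ep$, and the bounded-overlap estimate must be extracted from a branch-dependent Moran cover together with a careful use of the separating sets $U_\go=S_\go(U_{\go_n})$ — the two extremal ratios $\ul s_i,\ol s_i$ entering exactly as on the two sides of \eqref{eq236}. (Under the stronger strong separation condition this step collapses, since $\pi$ is then bi-Hölder and the uniform regularity of $\gn$ passes directly to $\mu$.)
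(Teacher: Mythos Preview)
Your outline is the same Moran-cover-plus-volume-packing strategy the paper uses, but the paper makes one choice that dissolves the obstacle you flag in Step~3: it builds the antichain from the \emph{lower} contraction ratios, setting $\gG_\ep=\{\go\in W:\ul s_{\go^-}\ge\ep>\ul s_\go\}$ with $\ul s_\go:=\ul s_{\go_1}\cdots\ul s_{\go_{n-1}}$. The bi-Lipschitz lower bound then forces each $X_\go=S_\go(X)$ to contain a ball of radius $a\,\ul s_\go\ge a\,\ul s_{\min}\ep$ (here $X=\te{cl(int}\,X)$ contains a ball of some radius $a>0$), and by the open set condition these inner balls are pairwise disjoint; a volume comparison against $B(x,2\ep)$ then bounds $|\gG_\ep(x)|$ by $2^k(a\,\ul s_{\min})^{-k}$, uniformly in $x$ and $\ep$. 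With your stopping rule (via $\ol s$), diameters are $\le\ep$ but the inradii are only $\gtrsim\ul s_\go\cdot\min_j\te{inrad}(U_j)$, and $\ul s_\go/\ol s_\go=\prod_i(\ul s_{\go_i}/\ol s_{\go_i})$ decays exponentially in $|\go|$ whenever some $\ul s_i<\ol s_i$. So the assertion in your Step~3 that a volume--packing argument caps the count by a constant $M$ is not merely delicate --- with the $\ol s$-antichain it is actually false in the non-conformal regime, and no ``branch-dependent'' refinement of the separating sets $U_\go$ will rescue a uniform count from that antichain. Flipping the stopping rule to the lower ratios removes your obstacle at once; the length estimate $|\go|\ge\log\ep/\log\ul s_{\min}$ you need in Step~2 survives unchanged, since $\ul s_{\min}^{|\go|}\le\ul s_\go<\ep$.

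On the other hand your Step~2 is sharper than the paper's. The paper simply bounds $p_\go\le P_{\max}^{|\go|}$ with $P_{\max}=\max\{\max_ip_i,\ \max_{i,j}p_{ij}\}$ and reads off $\gh=\log P_{\max}/\log\ul s_{\min}$; this collapses to $\gh=0$ whenever some row of $P$ contains a single $1$. Your device --- that an admissible word of length $n$ must carry at least $\lfloor(n-1)/N\rfloor$ transitions of probability $\le\ol p<1$, else one finds an absorbing deterministic cycle contradicting irreducibility (barring the degenerate single-cycle chain, in which $\mu$ is purely atomic and \eqref{eq1} fails anyway) --- yields a genuinely positive exponent in that situation and is worth retaining alongside the paper's cover.
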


\begin{proof}
Let  $\epsilon_0 \in (0, 1]$ be arbitrary. By \cite[Lemma 12.3]{GL1}, it suffices to show \eqref{eq1} for every $x \in E$ and all $\ep \in (0, \ep_0)$. Write $\underline s_{\min}=\min\set{\ul s_{i} : 1\leq i\leq N}$, and for $\go=(\go_1, \go_2, \cdots, \go_n)\in W$, let
\[\go^-=(\go_1, \go_2, \cdots, \go_{n-1}), \, X_\go:=S_\go(X), \te{ and } \ul {s} _\go:=\ul s_{\go_1}\ul s_{\go_2}\cdots \ul s_{\go_{n-1}}.\]
 Without any loss of generality we assume that the diameter of $X$ is one. Let
\[\gG_\ep=\set{\go \in W: \ul s_{\go^{-}}\geq \ep> \ul s_\go} \te{ and } \gG_\ep(x)=\set{\go \in \gG_\ep : X_\go \ii B(x, \ep) \neq \es}, \]
for $x\in X$.
Notice that $\gG_\ep$ forms a finite maximal antichain.
We claim that there exists a constant $C>0$, independent of $x$ and $\ep$, such that $|\gG_\ep(x)|<C$.

Since $X$ has nonempty interior, $X$ contains a ball of radius $a$, where $a>0$ is a constant, and so for each $\go \in \gG_\ep$, the set $X_\go$ contains a ball of radius $a \ul s_\go\geq a \ul s_{\go^-}\ul s_{\min} \geq a\ul s_{\min}\ep$, and due to the open set condition, all such balls are disjoint. On the other hand, all $X_\go$ for $\go \in \gG_\ep$ are contained in the balls $B(x, 2\ep)$ for $x\in X$. Hence, comparing the volumes, for all $x\in X$, we have
\[|\gG_\ep(x)| (as_{\min}\ep)^k\leq (2\ep)^k\te{ which implies } |\gG_\ep(x)| \leq 2^k(as_{\min})^{-k},\]
where $k$ is dimension of the underlying space. Write $C=2^k(as_{\min})^{-k}$. Then,  $C>0$, and is independent of $x$ and $\ep$, and thus the claim is proved.
Now to prove the lemma, write
$$
P_{\max}=\max\set {\max\big\{p_1, p_2, \cdots, p_N\big\},
\max\big\{p_{ij} : 1\leq i, j\leq N\big\}}.
$$
We can take $B(x, \ep)$ such that $B(x, \ep) = \uu_{\go \in
\gG_\ep(x)} (X_\go \cap B(x, \ep) )$ for any $x\in X$. Then, we obtain
\[\mu(B(x, \ep))\leq \sum_{\go \in \gG_\ep(x)} \mu(X_\go)=\sum_{\go \in \gG_\ep(x)} \mu(E_\go)\leq C \max_{\go \in \gG_\ep(x)} p_\go\leq C \max_{\go \in \gG_\ep(x)} (P_{\max})^{|\go|}.\]
For $\go\in \gG_\ep(x)$, we have
$s_{\min}^{|\go|}\leq s_\go<\ep$, and so, $|\go|\geq \frac{\log \ep}{\log s_{\min}}$. Combining these facts, we have
\[\mu(B(x, \ep))\leq C(P_{\max})^{\frac{\log \ep}{\log s_{\min}}}=C\ep^{\log (P_{\max})/\log s_{\min}}.\]
The lemma follows by setting $\gh=\log (P_{\max})/\log s_{\min}$.
As in \cite[Proposition 3.1]{GL2}, one can see that the condition in \cite[Theorem 2.5]{GL2} is satisfied. As a consequence, $e_n(\mu)<e_{n-1}(\mu)$ $(e_0(\mu):=\infty)$ and $\C C_n(\mu)\neq \es$ for every $n\in \D N$.
\end{proof}

In the next section we state and prove the main results of the
paper.

\section{Main Results}
Theorem~\ref{th1} and Theorem~\ref{th2} contain the main results of the paper.
First, we state and prove the following theorem about the lower and the upper local dimensions of the measure $\mu$.

\begin{theorem}\label{th1} Let $\mu$ be the probability measure generated by the hyperbolic recurrent IFS $\set{X; S_{i}, p_{ij} :   1\leq i, j\leq N}$ satisfying the open set condition. Then,  for $\mu$-a.e. $x\in X$,
\[ \frac{\sum_{i=1}^N \sum_{j=1}^N p_i p_{ij} \log p_{ij}}{\sum_{i=1}^N p_i \log \ul s_{i}}\leq \ul d_\mu(x)\leq \ol d_\mu(x) \leq \frac{\sum_{i=1}^N \sum_{j=1}^N p_i p_{ij} \log p_{ij}}{\sum_{i=1}^N   p_i\log \ol s_{i}},\]
where $(p_1, p_2, \cdots, p_N)$ is the stationary distribution
associated with $[p_{ij}]_{1\leq i, j\leq N}$.
\end{theorem}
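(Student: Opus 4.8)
The plan is to transfer the computation from the symbolic space $\gO$ to the limit set $E$ via the coding map $\pi$, using the Birkhoff ergodic theorem on the Markov shift $(\gO, \F B, \gn, T)$. First I would fix $\go = (\go_1, \go_2, \cdots) \in \gO$ and analyze the two natural scales associated with the cylinder $\go|_n$: the measure scale $\gn[\go|_n] = p_{\go|_n} = p_{\go_n} P_{\go_n \go_{n-1}} \cdots p_{\go_2 \go_1}$, and the geometric scales $\ul s_{\go_1} \cdots \ul s_{\go_{n-1}}$ and $\ol s_{\go_1} \cdots \ol s_{\go_{n-1}}$ of the contraction $S_{\go|_n}$. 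Taking logarithms and dividing by $n$, Birkhoff's theorem applied to the functions $\go \mapsto \log p_{\go_2 \go_1}$ (together with the boundedness correction coming from the single factor $p_{\go_n}$, which is negligible after dividing by $n$) and $\go \mapsto \log \ul s_{\go_1}$, $\go \mapsto \log \ol s_{\go_1}$ gives, for $\gn$-a.e.\ $\go$,
\[
\frac{1}{n}\log \gn[\go|_n] \to \sum_{i=1}^N\sum_{j=1}^N p_i p_{ij}\log p_{ij}, \quad \frac{1}{n}\log \ul s_{\go|_n} \to \sum_{i=1}^N p_i \log \ul s_i, \quad \frac{1}{n}\log \ol s_{\go|_n} \to \sum_{i=1}^N p_i \log \ol s_i,
\]
where $\ul s_{\go|_n} = \ul s_{\go_1}\cdots\ul s_{\go_{n-1}}$ and similarly for $\ol s$; here one uses that the invariant measure $\gn$ disintegrates so that the frequency of the transition $i \to j$ along $\gn$-a.e.\ orbit is $p_i p_{ij}$.

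Next I would pass from cylinder ratios to ball ratios. Write $x = \pi(\go)$. Since $E_{\go|_n} = S_{\go|_n}(E_{\go_n})$ has diameter at most $\ol s_{\go|_n} \cdot \operatorname{diam}(X) = \ol s_{\go|_n}$, and $x \in E_{\go|_n}$, we get $E_{\go|_n} \subseteq B(x, \ol s_{\go|_n})$, hence $\mu(B(x, \ol s_{\go|_n})) \geq \mu(E_{\go|_n}) \geq \gn[\go|_n] = p_{\go|_n}$ (the containment of cylinder measure into the image measure is where the open set / separation structure enters; with OSC one has $\mu(E_{\go|_n}) \geq \gn[\go|_n]$ up to the overlap bookkeeping, which by Lemma~\ref{lemma00}'s style of volume argument costs only a bounded multiplicative constant). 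Taking logs, dividing by $\log \ol s_{\go|_n}$ (a negative quantity, so the inequality flips appropriately) and letting $n \to \infty$ along this sequence of radii yields the upper bound
\[
\ol d_\mu(x) = \limsup_{r\to 0}\frac{\log\mu(B(x,r))}{\log r} \leq \frac{\sum_i\sum_j p_i p_{ij}\log p_{ij}}{\sum_i p_i\log \ol s_i}.
\]
For the lower bound I would instead use the stopping-time antichains $\gG_\ep = \{\go \in W : \ul s_{\go^-} \geq \ep > \ul s_\go\}$ from Lemma~\ref{lemma00}: for $x = \pi(\go)$ the ball $B(x,\ep)$ meets at most $C$ of the sets $X_\gt$, $\gt \in \gG_\ep$, so $\mu(B(x,\ep)) \leq C\max_\gt p_\gt$ over the relevant $\gt$, and along $\gn$-a.e.\ orbit the word that codes $x$ in $\gG_\ep$ has length $n(\ep)$ with $\ul s_{\go|_{n(\ep)}} < \ep$ and $p_{\go|_{n(\ep)}}$ controlled by the Birkhoff limit; dividing $\log\mu(B(x,\ep))$ by $\log\ep$ and using $\frac{1}{n}\log\ul s_{\go|_n} \to \sum_i p_i\log\ul s_i$ gives $\ul d_\mu(x) \geq \big(\sum_i\sum_j p_i p_{ij}\log p_{ij}\big)/\big(\sum_i p_i\log\ul s_i\big)$.

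The main obstacle, I expect, is the bookkeeping that converts \emph{cylinder} measure and \emph{cylinder} diameter into honest estimates on $\mu(B(x,r))$ for a \emph{continuum} of radii $r$, uniformly for $\mu$-a.e.\ $x$: the Birkhoff convergence is only along the discrete sequence of scales $\ul s_{\go|_n}$, so one must interpolate between consecutive cylinder scales and control how much $\mu$ and the radius can change in between (this is exactly where $\ul s_{\min}, \ol s_{\max}$ bounded away from $0$ and $1$ is used), and one must handle the $\mu$-null exceptional set where either Birkhoff fails or $x$ has multiple codings. A clean way to package this is to prove the two inequalities for $\gn$-a.e.\ $\go$ as statements about $\liminf$ and $\limsup$ of $\log p_{\go|_n}/\log \ul s_{\go|_n}$ versus $\log p_{\go|_n}/\log \ol s_{\go|_n}$, then invoke $\mu = \gn\circ\pi^{-1}$ together with the sandwich $E_{\go|_n} \subseteq B(x, \ol s_{\go|_n})$ and $B(x, \ul s_{\go|_n}) \cap E \subseteq$ (bounded union of cylinders of comparable generation) to squeeze $\log\mu(B(x,r))/\log r$ between the two Birkhoff ratios for all small $r$; the boundedness of $|\gG_\ep(x)|$ from Lemma~\ref{lemma00} is precisely the tool that makes the second containment cost only an additive $O(1)$ in the log, which vanishes after division by $\log r \to -\infty$.
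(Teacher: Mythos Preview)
Your proposal is correct and follows essentially the same route as the paper's proof: apply Birkhoff's ergodic theorem on the Markov shift $(\gO,\F B,\gn,T)$ to the functions $\go\mapsto\log p_{\go_2\go_1}$ and $\go\mapsto\log s_{\go_1}$, then convert cylinder estimates to ball estimates by using the inclusion $E_{\go|_n}\subseteq B(\pi(\go),\ol s_{\go|_n}\cdot\mathrm{const})$ for the upper local bound and a stopping-time antichain together with a Hutchinson-type bounded-multiplicity count for the lower local bound. The only cosmetic difference is that the paper first specializes to similarities with a fixed ratio $s_i\in[\ul s_i,\ol s_i]$ and inserts the inequalities for $\ul s_i,\ol s_i$ at the very end, whereas you carry the two scales $\ul s_{\go|_n}$ and $\ol s_{\go|_n}$ throughout; the ergodic and geometric content is identical.
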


\begin{proof} Recall that for each hyperbolic map $S_i$, we have  $\ul s_{i} d(x, y) \leq d(S_{i}(x),
S_{i}(y)) \leq \ol s_{i} d(x, y)$ for all $x, y\in X$, where $0<\ul
s_{i} \leq \ol s_{i} <1$, $1\leq i\leq N$. To prove the theorem, in the first sight we assume that each $S_{i}$ is a similarity mapping with similarity ratio $s_{i}$ for $1\leq i\leq N$, where $s_i$ is a fixed number such that $\ul s_{i}\leq s_i\leq \ol s_{i}$.
Let $E_1, E_2, \cdots, E_N$ denote the components of the limit set $E$ such that
\[E=\UU_{i=1}^N E_i.\]
Fix $x\in E$, and let $\go=(\go_1, \go_2, \cdots) \in \gO$ be a code of $x$. Consider the ball $B(x, \rho)$ of diameter $\rho$ centered at $x$. Let $\ell$ be the least positive integer such that
\[S_{\go_1} \circ S_{\go_2} \circ \cdots \circ S_{\go_{\ell-1}} (E_{\go_\ell}) \sci B(x, \rho),\]
which implies
\[\mu(B(x, \rho))\geq p_{\go_\ell}  p_{\go_{\ell}\go_{\ell-1}}  \cdots  p_{\go_{2}\go_{1}}.\]
Set
\begin{align*} s_{\min}& :=\min\set{s_{i} : 1\leq i\leq N}, \   p_{\min} : =\min\set {p_1, p_2, \cdots, p_N}, \\
p_{\max} & :=\max\set {p_1, p_2, \cdots, p_N},  \ L_{\min} :=\min\set {\te{diam}(E_j) : 1\leq j\leq N}, \\
L_{\max}&:=\max\set {\te{diam}(E_j) : 1\leq j\leq N}.
 \end{align*} Then,  by the definition of $\ell$, we have
\[\prod_{i=1}^{\ell-1} s_{\go_{i}} \te{diam}(E_{\go_\ell})\leq 2\rho, \te{ which yields }  \prod_{i=1}^{\ell-1} s_{\go_i} \leq 2\rho L_{\min}^{-1},\]
and \[\prod_{i=1}^{\ell-2} s_{\go_i}\te{diam}(E_{\go_{\ell-1}})\geq \rho, \te{ which yields }
 \prod_{i=1}^{\ell-1} s_{\go_i}\geq \rho s_{\min}L_{\max}^{-1}.\]
Thus, by the definition of $\ell$, we have
\begin{equation} \label{eq34}
\rho s_{\min}L_{\max}^{-1}\leq \prod_{i=1}^{\ell-1} s_{\go_i}\leq 2\rho L_{\min}^{-1}.
\end{equation}
Therefore, by \eqref{eq34}, we have
\begin{align} \label{eq35} \mu(B(x, \rho)) & \geq\frac{(\prod_{i=1}^{\ell-1}p_{\go_{i+1}\go_i}) p_{\go_{\ell}}}{\prod_{i=1}^{\ell-1} s_{\go_i}}
\Big(\frac{\rho s_{\min}}{L_{\max}}\Big) \geq C_1\rho \prod_{i=1}^{\ell-1} \frac{ p_{\go_{i+1}\go_i}}{s_{\go_i}},  \end{align}
where $C_1=p_{\min} (s_{\min}/L_{\max})$. We next obtain an upper bound for $\mu(B(x, \rho))$.  For
$\rho>0$ and $(j_1, j_2, \cdots) \in \gO$, let $q$ be the least
integer such that
\begin{equation} \label{eq2} \prod_{m=1}^{q-1} s_{j_m}<\rho.\end{equation}
Let $S(\rho)$ be the set of such finite codes $(j_1, j_2, \cdots, j_q)$. By identifying $(j_1, j_2, \cdots, j_q)$ with the corresponding cylinder set in $\gO$, we see that $S(\rho)$ generates a partition of $\gO$. Let $U_1, U_2, \cdots, U_N$ be the open sets that arise in the open set condition. For each $(j_1, \cdots, j_q) \in S(\rho)$, set
\[U_{j_1, \cdots, j_q} =S_{j_1} \circ S_{j_2} \circ \cdots \circ S_{j_{q-1}}(U_{j_q}).\]
For a fixed $j_1 \in \{1, 2, \cdots, N\}$ the sets $\C
C_{j_1}(\rho)=\set{U_{j_1,j_2,\cdots,j_q} : (j_1, j_2, \cdots, j_q)
\in S(\rho)}$ are open disjoint sets. Recall that by hypothesis each $S_{i}$ is a similarity mapping, and so by \eqref{eq2}, there
are constants $c_1>0$ and $c_2>0$ independent of $\rho$ and $j_1$
such that each $U_{j_1, \cdots, j_q}$ contains a ball of radius
$c_1\rho$ and is contained in a ball of radius $c_2\rho$.
Consequently, Lemma 5.3.1 of Hutchinson (see \cite{H}) implies that
there are at most $\big((1+2c_2)/c_1\big)^k$ elements of $\C
C_{j_1}(\rho)$ whose closure meets $B(x, \rho)$, where $k$ is
the dimension of the underlying space. Let $E_{j_1,j_2,\cdots,
j_{q}}=S_{j_1} \circ S_{j_2} \circ \cdots \circ
S_{j_{q-1}}(E_{j_q})$ and let $I_\go(\rho)$ denote the set of $(j_1,
\cdots, j_q) \in S(\rho)$ such that $E_{j_1,j_2,\cdots, j_{q}}$
meets $B(x, \rho)$. The open set condition implies that
$E_{j_1,j_2,\cdots, j_{q}}\sci \ol U_{j_1,j_2,\cdots, j_q}$ and thus
$I_\go (\rho)$ contains at most $N\big((1+2c_2)/c_1\big)^k$
elements. Write $\tilde N=N((1+2c_2)/c_1)^k$. Then, we have
\begin{align*}
&\mu(B(x, \rho)) =\gn(\pi^{-1} (B(x, \rho))) \leq \sum_{(j_1, \cdots, j_q) \in I_\go(\rho)}  p_{j_q}p_{j_{q} j_{q-1}} \cdots p_{j_2j_1}\\
&=\sum_{(j_1, \cdots, j_q) \in I_\go(\rho)}\frac{ p_{j_q}p_{j_{q} j_{q-1}} \cdots p_{j_2j_1}}{s_{j_1} s_{j_{2}} \cdots s_{j_{q-1}}}   \Big(s_{j_1} s_{j_{2}} \cdots s_{j_{q-1}}\Big)\leq p_{\max}  \rho  \sum_{(j_1, \cdots, j_q) \in I_\go(\rho)}\Big (\prod_{i=1}^{q-1} \frac{p_{j_{i+1} j_{i}}}{s_{j_{i}}}\Big) \   [\te{ by } \eqref{eq2} ]\\
& \leq  p_{\max} \tilde N \rho \max_{(j_1, \cdots, j_q) \in I_\go(\rho)} \prod_{i=1}^{q-1}
\frac{p_{j_{i+1} j_{i}}}{s_{j_{i}}},
\end{align*}
which yields
\begin{equation} \label{eq3} \mu(B(x, \rho))\leq   C \rho \max_{(j_1, \cdots, j_q) \in I_\go(\rho)}\prod_{i=1}^{q-1}\frac{p_{j_{i+1} j_{i}}}{s_{j_{i}}},\end{equation}
where $C=p_{\max} \tilde N$. Now, let us define two functions $f, g : \gO \to \D R$ as follows:
\[f(\gt_1, \gt_2, \cdots) =\log p_{\gt_2\gt_1} \te{ and } g(\gt_1, \gt_2, \cdots) =\log s_{\gt_1},\]
for $\gt=(\gt_1, \gt_2, \cdots) \in \gO $. Then,  by Birkhoff's Ergodic Theorem, for $\gn$-a.e. $\gt\in\gO$, we have
\begin{align*}
\lim_{q\to \infty} \frac 1 q \sum_{i=0}^{q-1} f(T^i(\gt))& =\int_{\gO} f(\gt) d\gn \te{ and }
\lim_{q\to \infty} \frac 1 q \sum_{i=0}^{q-1} g(T^i(\gt))=\int_{\gO} g(\gt) d\gn.
\end{align*}
Notice that
\[ \frac 1 q \sum_{i=0}^{q-1} f(T^i(\gt))=\frac 1 q \sum_{i=0}^{q-1} f(\gt_{i+1}\gt_{i+2} \cdots)=\frac 1 q \sum_{i=0}^{q-1}\log p_{\gt_{i+2}\gt_{i+1}}=\frac 1 q \log \prod_{i=1}^q p_{\gt_{i+1}\gt_i}. \]
On the other hand, recalling the fact that $[i]=\set{(\gt_1, \gt_2, \cdots)\in \gO : \gt_1=i}$, we have
\begin{align*}
\int_\gO f(\gt) d\gn&=\sum_{i=1}^N \int_{[i]} f(\gt) d\gn=\sum_{i=1}^N \sum_{j=1}^N \int_{[ij]}f(\gt)d\gn=\sum_{i=1}^N \sum_{j=1}^N \gn[ij] \log p_{ji}\\
 &=\sum_{i=1}^N  \sum_{j=1}^N p_j p_{ji} \log p_{ji}=\sum_{i=1}^N \sum_{j=1}^N p_i p_{ij} \log p_{ij}.
\end{align*}
Hence, for $\gn$-a.e. $\gt\in \gO$, we have
\begin{equation} \label {eq13} \lim_{q\to \infty} \frac 1 q \log \prod_{i=1}^q p_{\gt_{i+1}\gt_i}=\sum_{i=1}^N \sum_{j=1}^N p_i p_{ij} \log p_{ij},\end{equation}
and similarly,
\begin{equation} \label {eq14}\lim_{q\to \infty} \frac 1 q \log \prod_{i=1}^q s_{\gt_i}=\sum_{i=1}^N \sum_{j=1}^N p_i p_{ij} \log s_i=\sum_{i=1}^N  p_i \log s_{i}.\end{equation}
By \eqref{eq2}, we have
\[ \prod_{m=1}^{q-1} s_{\gt_m}<\rho \leq \prod_{m=1}^{q-2} s_{\gt_m} \te{ implying } \rho s_{\min} \leq s_{\gt_1} s_{\gt_2}\cdots s_{\gt_{q-1}} =\prod_{i=1}^{q-1} s_{\gt_i}<\rho.\]
Thus, we see that $q\to \infty$ if $\rho\to 0$, and hence, for $\gn$-a.e. $\gt\in \gO$, we have
\begin{equation} \label{eq15} \lim_{\rho\to 0} \frac{\log \rho}{q}=\lim_{q\to \infty} \frac 1 q \log \prod_{i=1}^{q-1} s_{\gt_i} =\sum_{i=1}^N p_i\log s_{i}.
\end{equation}
Using \eqref{eq13}, \eqref{eq14} and \eqref{eq15}, for $\gn$-a.e. $\gt\in \gO$, we have
\begin{equation} \label{eq16}
\lim_{\rho\to 0} \log \Big (\prod_{i=1}^{q-1} \frac{p_{
\gt_{i+1}\gt_{i}}}{s_{\gt_{i}}}\Big)\Big /\log \rho= \frac{\sum_{i=1}^N \sum_{j=1}^N p_i p_{ij} \log p_{ij}}{\sum_{i=1}^N p_i  \log s_{i}}-1.
\end{equation}
By \eqref{eq3} and \eqref{eq16}, we have
\begin{equation} \label{eq17} \liminf_{\rho\to 0} \frac{\log\mu(B(x, \rho))}{\log \rho} \geq \frac{\sum_{i=1}^N \sum_{j=1}^N p_i p_{ij} \log p_{ij}}{\sum_{i=1}^N p_i  \log s_{i}}\geq \frac{\sum_{i=1}^N \sum_{j=1}^N p_i p_{ij} \log p_{ij}}{\sum_{i=1}^N p_i  \log \ul s_{i}}.\end{equation}
Equivalent to \eqref{eq15}, by \eqref{eq34}, for $\gn$-a.e. $\gt\in \gO$,  we have
\begin{equation} \label{eq18} \lim_{\rho\to 0} \frac{\log \rho}{\ell}=\lim_{\ell\to \infty} \frac 1 \ell \log \prod_{i=1}^{\ell-1} s_{\gt_i } =\sum_{i=1}^Np_i \log s_{i}.
\end{equation}
Thus, equivalent to \eqref{eq16}, the following relation is also true, for $\gn$-a.e. $\gt\in \gO$,
\begin{equation} \label{eq19}
\lim_{\rho\to 0} \log \Big (\prod_{i=1}^{\ell-1} \frac{p_{\gt_{i+1}
\gt_{i} }}{s_{\gt_{i} }}\Big)\Big /\log \rho=\frac{\sum_{i=1}^N \sum_{j=1}^N p_i p_{ij} \log p_{ij}}{\sum_{i=1}^N p_i  \log s_{i}}-1.
\end{equation}
By \eqref{eq35} and \eqref{eq19}, we have
\begin{equation} \label{eq20} \limsup_{\rho\to 0} \frac{\log\mu(B(x, \rho))}{\log \rho} \leq \frac{\sum_{i=1}^N \sum_{j=1}^N p_i p_{ij} \log p_{ij}}{\sum_{i=1}^N p_i  \log s_{i}}\leq \frac{\sum_{i=1}^N \sum_{j=1}^N p_i p_{ij} \log p_{ij}}{\sum_{i=1}^N p_i  \log \ol s_{i}}.\end{equation}
Thus, by \eqref{eq17} and \eqref{eq20}, for $\mu$-almost every $x\in X$,  we have
\[\frac{\sum_{i=1}^N \sum_{j=1}^N p_i p_{ij} \log p_{ij}}{\sum_{i=1}^N p_i \log \ul s_{i}}\leq \ul d_\mu(x)\leq  \ol d_\mu(x) =\frac{\sum_{i=1}^N \sum_{j=1}^N p_i p_{ij} \log p_{ij}}{\sum_{i=1}^Np_i  \log \ol s_{i}},\]
which completes the proof of the theorem.
\end{proof}

Now, we state and prove the following theorem, which gives
bounds for the lower and the upper quantization dimensions of the
measure $\mu$ with respect to the geometric mean error.
\begin{theorem}\label{th2} Let $\mu$ be the probability measure generated by the hyperbolic recurrent IFS $\set{X; S_{i}, p_{ij} :   1\leq i, j\leq N}$ satisfying the strong separation condition. Then,
\[\frac{\sum_{i=1}^N \sum_{j=1}^N p_i p_{ij} \log p_{ij}}{\sum_{i=1}^N  p_i  \log \ul s_{i}}\leq \ul D(\mu) \leq \ol D(\mu) \leq \frac{\sum_{i=1}^N \sum_{j=1}^N p_i p_{ij} \log p_{ij}}{\sum_{i=1}^N  p_i \log \ol s_{i}},\]
where $(p_1, p_2, \cdots, p_N)$ is the stationary distribution associated with $[p_{ij}]_{1\leq i, j\leq N}$.
\end{theorem}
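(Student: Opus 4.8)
The plan is to follow the by-now standard two-sided estimation of the quantization error that Graf and Luschgy used for self-similar measures, adapting it to the recurrent setting exactly as the local-dimension argument of Theorem~\ref{th1} was adapted. As in the proof of Theorem~\ref{th1}, by passing to a fixed contraction ratio $s_i\in[\ul s_i,\ol s_i]$ it suffices to treat the case where each $S_i$ is a similarity with ratio $s_i$; the bounds obtained will involve $\sum_i p_i\log s_i$, and monotonicity in $s_i$ then yields the claimed inequalities with $\ul s_i$ and $\ol s_i$. Fix such ratios and recall from Lemma~\ref{lemma00} that $\mu(B(x,\ep))\le C\ep^{\gh}$, so by Theorem~\ref{the90} we already know $\te{dim}_\ast(\mu)\le\ul D(\mu)\le\ol D(\mu)\le\te{Dim}^\ast(\mu)$; combined with \eqref{eq89} this essentially finishes the proof, but the point is to give an \emph{independent} derivation under the SSC, so I would instead estimate $\hat e_n(\mu)$ directly and invoke the characterization in Proposition~\ref{prop1}.

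First I would introduce, for each $\rho>0$, the finite maximal antichain $\gG_\rho=\setm{\go\in W}{s_{\go^-}\ge\rho>s_\go}$ (here $s_\go=s_{\go_1}\cdots s_{\go_{n-1}}$ for $\go\in W_n$), and set $P_\rho=\prod$-type weights: more precisely I would work with the "stopping-time" antichains $\gG$ tuned so that $p_\go s_\go^{-1}$-type products are comparable across $\go\in\gG$, mimicking \eqref{eq35} and \eqref{eq3}. The key quantity to control is $\int\log d(x,\ga)\,d\mu(x)$ for a well-chosen $\ga$ with $\te{card}(\ga)\le n$. For the \textbf{upper bound on $\hat e_n$} (lower bound on $-\hat e_n$, hence on $\ul D,\ol D$ from below): given $n$, choose an antichain $\gG$ with $|\gG|\le n$ obtained by a greedy/stopping rule on the products $p_\go^{1}\cdot(\text{something})$, pick one point $a_\go\in E_\go$ for each $\go\in\gG$, and estimate $\int\log d(x,a_\go)\,d\mu|_{E_\go}\le\log(\te{diam}\,E_\go)\le\log s_\go+O(1)$; summing against $\mu(E_\go)=p_\go$ and optimizing the stopping rule via a Lagrange/Hölder argument (the discrete analogue of \cite[Lemma~3.5, Proposition~4.3]{GL2}) gives $-\hat e_n\gtrsim \frac{1}{\kappa}\log n$ with $\kappa=\frac{\sum_i\sum_j p_ip_{ij}\log p_{ij}}{\sum_i p_i\log s_i}$. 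For the \textbf{lower bound on $\hat e_n$}: given any $\ga$ with $\te{card}(\ga)\le n$, use the SSC to find an antichain $\gG$ with $|\gG|$ comparable to $n$ such that for a positive-$\mu$-mass union of cylinders $[\go]$, $\go\in\gG$, the set $E_\go$ is at distance $\gtrsim s_\go$ from every point of $\ga$ that is not "assigned" to it, and a counting argument shows a definite proportion of the $\mu$-mass sits in such cylinders; then $\int\log d(x,\ga)\,d\mu\ge\sum_{\go}p_\go(\log s_\go-O(1))$ over this good family, giving $-\hat e_n\lesssim\frac1\kappa\log n$. The Birkhoff averages \eqref{eq13}--\eqref{eq15} are exactly what convert the products $\prod p_{\gt_{i+1}\gt_i}$ and $\prod s_{\gt_i}$ along cylinders in $\gG$ into the ratio $\kappa$, so the ergodic input is identical to Theorem~\ref{th1}.

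Finally I would assemble these two estimates: $-\hat e_n(\mu)\asymp\frac1\kappa\log n$ forces both $\ul D(\mu)$ and $\ol D(\mu)$ to equal $\kappa$ in the fixed-ratio case, and then the monotonicity argument of the first paragraph upgrades this to $\frac{\sum\sum p_ip_{ij}\log p_{ij}}{\sum p_i\log\ul s_i}\le\ul D(\mu)\le\ol D(\mu)\le\frac{\sum\sum p_ip_{ij}\log p_{ij}}{\sum p_i\log\ol s_i}$. The main obstacle I anticipate is the lower bound on $\hat e_n$: one must quantify, using only the SSC (not OSC), that no set of $n$ points can be $\log$-close to too large a $\mu$-fraction of the distinct, positively separated pieces $\set{E_\go:\go\in\gG}$ simultaneously — this is where the recurrent structure (variable cylinder lengths, the transition constraints $p_{\go_{i+1}\go_i}>0$) makes the bookkeeping heavier than in the i.i.d.\ self-similar case treated by Graf--Luschgy, and one has to be careful that the "good" family of cylinders still carries a definite proportion of $\mu$ uniformly in $n$, which is where the irreducibility of $P$ and positivity of the stationary vector $p$ enter.
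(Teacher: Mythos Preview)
Your proposal has a genuine gap that stems from transplanting the local-dimension strategy of Theorem~\ref{th1} too literally. The Birkhoff averages \eqref{eq13}--\eqref{eq15} are \emph{pointwise} statements along $\gn$-generic codes; they feed directly into the definition of $\ul d_\mu(x),\ol d_\mu(x)$ because those are pointwise quantities. Quantization error $\hat e_n(\mu)$, by contrast, is a global infimum over all $n$-point sets $\ga$, and for the lower bound you must control \emph{every} such $\ga$ simultaneously. Your sketch ``find an antichain $\gG$ with $|\gG|$ comparable to $n$ such that a positive-$\mu$-mass union of $E_\go$ is at distance $\gtrsim s_\go$ from $\ga$'' is the crux, and it is not clear how the ergodic theorem helps here: the adversary choosing $\ga$ can concentrate points on non-generic cylinders, and a ``good family carrying a definite proportion of $\mu$ uniformly in $n$'' does not follow from Birkhoff. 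The monotonicity reduction in your first paragraph is also imprecise: the measure $\mu$ is fixed by the given hyperbolic maps, so one cannot ``pass to a fixed contraction ratio''; one can only use the bi-Lipschitz bounds $\ul s_i\le$ ratio $\le\ol s_i$ on diameters and distances, and that must be done inside the estimates, not by a preliminary reduction.

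The paper's route avoids both issues by working recursively with the conditional measures $\hat\mu_i=\mu(\cdot\mid E_i)$ rather than ergodically. The self-referential structure $\hat\mu_i=\frac{1}{p_i}\sum_j p_jp_{ji}\,\hat\mu_j\circ S_i^{-1}$ yields one-step inequalities $\hat e_n(\hat\mu_i)\le\log\ol s_i+\min\{\frac{1}{p_i}\sum_j p_jp_{ji}\hat e_{n_j}(\hat\mu_j)\}$ (Lemma~\ref{lemma462}) and, under SSC, the matching lower bound with $\ul s_i$ (Lemma~\ref{lemma461}, via Voronoi regions). The upper bound on $\ol D(\mu)$ is then obtained by iterating along the antichain $\gG(\ep_n)=\{\gs:p_{\gs^-}\ge\ep_n>p_\gs\}$ determined by the \emph{probabilities} $p_\gs$ (not the ratios $s_\go$ as you propose), which makes $\sum_\gs p_\gs\log p_\gs-\log\ep_n\le 0$ immediate (Proposition~\ref{prop111}). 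The lower bound on $\ul D(\mu)$ is closed by induction on $n$ together with the elementary Gibbs-type inequality $\sum_j q_j\log(n_j/m)\le\sum_j q_j\log q_j$ (Lemma~\ref{lemma1001}), which replaces your counting argument and needs no ergodic input at all (Proposition~\ref{prop112}). If you want to rescue your outline, the piece to supply is precisely this recursive lower bound; the heuristic ``good-cylinder'' argument, as written, is not a proof.
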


To prove Theorem~\ref{th2} we need some lemmas and propositions. We
set
\[\ga_1:=\frac{\sum_{i=1}^N \sum_{j=1}^N p_i p_{ij} \log p_{ij}}{\sum_{i=1}^N  p_i  \log \ul s_{i}} \te{ and }\ga_2:=\frac{\sum_{i=1}^N \sum_{j=1}^N p_i p_{ij} \log p_{ij}}{\sum_{i=1}^N  p_i  \log \ol s_{i}}.\]
In the sequel for each $1\leq i\leq N$, let $\hat \mu_i$ be the conditional probability measure of $\mu$ given that $E_i$ has occurred, i.e., for any Borel $B\sci \D R^k$,
\[\hat \mu_i(B)=\frac{\mu(B\ii E_i)}{\mu(E_i)}=\frac 1 {p_i} \mu(B\ii E_i).\]
Notice that $\hat \mu_i$ has the support $E_i$ and $\mu_i=p_i\hat \mu_i$ for all $1\leq i\leq N$. Moreover,
\[\mu=\sum_{i=1}^N \mu_i=\sum_{i=1}^N \sum_{j=1}^N p_{ji} \mu_j\circ S_{i}^{-1} =\sum_{i=1}^N \sum_{j=1}^N p_jp_{ji} \hat \mu_j\circ S_{i}^{-1}.\]
Let us first prove the following lemma.

\begin{lemma} \label{lemma462}
For every $n\in \D N$ and $1\leq i\leq N$,
\[\hat e_n(\hat \mu_i)\leq\log \ol s_{i}  +\min \Big\{\frac {1}{p_i} \sum_{j=1}^N p_j p_{ji}  \hat e_{n_j}(\hat \mu_j) : n_j\geq 1, \sum_{j=1}^N n_j\leq n \Big\}.\]
\end{lemma}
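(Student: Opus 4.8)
The plan is to run the standard self-similar subadditivity argument for the geometric mean error, exploiting that under the strong separation condition the conditional measures satisfy a genuine mixture identity. From $\mu_i=\sum_{j=1}^N p_{ji}\,\mu_j\circ S_i^{-1}$ together with $\mu_i=p_i\hat\mu_i$ and $\mu_j=p_j\hat\mu_j$ one gets
\[
\hat\mu_i=\sum_{j=1}^N q_{ji}\,\hat\mu_j\circ S_i^{-1},\qquad q_{ji}:=\frac{p_jp_{ji}}{p_i},
\]
and the stationarity relation $\sum_{j} p_jp_{ji}=p_i$ shows that $q_{ji}\ge 0$ and $\sum_{j=1}^N q_{ji}=1$, so $\hat\mu_i$ is a convex combination of the pushforwards $\hat\mu_j\circ S_i^{-1}$. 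Note also that if $n<N$ there is no tuple $(n_1,\dots,n_N)$ with all $n_j\ge 1$ and $\sum_j n_j\le n$, so the right-hand side of the asserted inequality is $+\infty$ and there is nothing to prove; hence I may assume $n\ge N$.

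Fix a tuple $(n_1,\dots,n_N)$ with $n_j\ge 1$ and $\sum_j n_j\le n$, and fix $\vep>0$. For each $j$ I would pick $\alpha_j\sci\D R^k$ with $\te{card}(\alpha_j)\le n_j$ and $\int\log d(y,\alpha_j)\,d\hat\mu_j(y)\le\hat e_{n_j}(\hat\mu_j)+\vep$; these integrals are real numbers because, by Lemma~\ref{lemma00}, each $\hat\mu_j=\tfrac1{p_j}\mu|_{E_j}$ is compactly supported and satisfies $\hat\mu_j(B(x,\ep))\le (C/p_j)\ep^\gh$, hence is non-atomic and the negative part of $\log d(\cdot,\alpha_j)$ is integrable. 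Set $\alpha:=\bigcup_{j=1}^N S_i(\alpha_j)$; then $\te{card}(\alpha)\le\sum_j n_j\le n$, so $\alpha$ is admissible in the infimum defining $\hat e_n(\hat\mu_i)$. Using the mixture identity, the change of variables $x=S_i(y)$ under each $\hat\mu_j\circ S_i^{-1}$, the contraction estimate $d(S_i(y),\alpha)\le d(S_i(y),S_i(\alpha_j))\le\ol s_i\,d(y,\alpha_j)$, and monotonicity of $\log$, I obtain
\begin{align*}
\hat e_n(\hat\mu_i)
&\le\int\log d(x,\alpha)\,d\hat\mu_i(x)
=\sum_{j=1}^N q_{ji}\int\log d(S_i(y),\alpha)\,d\hat\mu_j(y)\\
&\le\sum_{j=1}^N q_{ji}\Big(\log\ol s_i+\int\log d(y,\alpha_j)\,d\hat\mu_j(y)\Big)
\le\log\ol s_i+\sum_{j=1}^N q_{ji}\big(\hat e_{n_j}(\hat\mu_j)+\vep\big),
\end{align*}
where the last step used $\sum_j q_{ji}=1$. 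Letting $\vep\downarrow 0$ and then taking the minimum over all admissible tuples $(n_1,\dots,n_N)$ gives precisely the claimed bound, since the index set is finite.

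The one point that needs care is that $\log d(\cdot,\cdot)$ is $[-\infty,\infty)$-valued: I must check that the integrals $\int\log d(y,\alpha_j)\,d\hat\mu_j$ and $\int\log d(x,\alpha)\,d\hat\mu_i$ are finite so that the additive split $\log(\ol s_i\,d(y,\alpha_j))=\log\ol s_i+\log d(y,\alpha_j)$ and the splitting of the integral over the mixture are legitimate, and that on the $\hat\mu_j$-null set $\{d(y,\alpha_j)=0\}$ the inequality still holds trivially (both sides equal $-\infty$). As indicated above, finiteness follows from the uniform power bound of Lemma~\ref{lemma00}, exactly as in the Graf--Luschgy treatment of the geometric mean error; apart from this, the argument is pure bookkeeping, so I do not expect any serious obstacle.
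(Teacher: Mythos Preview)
Your proof is correct and follows essentially the same route as the paper: both use the mixture identity $\hat\mu_i=\frac{1}{p_i}\sum_j p_jp_{ji}\,\hat\mu_j\circ S_i^{-1}$, take the candidate set $\alpha=\bigcup_j S_i(\alpha_j)$, and apply the upper Lipschitz bound $d(S_i(y),S_i(\alpha_j))\le \ol s_i\,d(y,\alpha_j)$. The only cosmetic difference is that the paper picks genuinely optimal sets $\alpha_j\in\C C_{n_j}(\hat\mu_j)$ (available by Lemma~\ref{lemma00}) rather than $\varepsilon$-approximants, and does not pause to discuss the vacuous case $n<N$ or the integrability of $\log d(\cdot,\alpha)$; your extra remarks on these points are sound and do not change the argument.
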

\begin{proof} Let $n_j \in \D N$ with $\sum_{j=1}^N n_j\leq n$. Let $\ga_{j} \in \C C_{n_j}(\hat \mu_j)$. Since $\hat \mu_i=\frac 1 {p_i}\sum_{j=1}^N p_j p_{ji}\hat \mu_j\circ S_{i}^{-1}$, and for any $1\leq i\leq N$, $\te{card}(\uu_{j=1}^N S_{i}(\ga_{j})) \leq \sum_{j=1}^N n_j\leq n$, we have
\begin{align*}
&\hat e_n(\hat \mu_i) \\
&\leq \int \log  d(x, \UU_{k=1}^N S_{i}(\ga_{k})) d\hat \mu_i =\frac {1}{p_i} \sum_{j=1}^N p_j p_{ji} \int\log  d(x, \UU_{k=1}^N S_{i}(\ga_{k})) d(\hat \mu_j\circ S_i^{-1})\\
& \leq \frac {1}{p_i} \sum_{j=1}^N p_j p_{ji} \int\log  d(S_i(x), S_i(\ga_{j})) d\hat \mu_j\leq\frac {1}{p_i} \sum_{j=1}^N p_j p_{ji}  \log \ol s_{i} +\frac {1}{p_i} \sum_{j=1}^N p_j p_{ji}   \int\log  d(x, \ga_{j}) d\hat \mu_j\\
&=\log \ol s_{i} +\frac {1}{p_i} \sum_{j=1}^N p_j p_{ji}  \hat e_{n_j}(\hat \mu_j),
\end{align*}
and thus, the lemma follows.
\end{proof}

\begin{lemma} \label{lemma46}
For every $n\in \D N$,
\[\hat e_n(\mu)\leq \sum_{i=1}^N p_i\log \ol s_i +\min \Big\{ \sum_{i, j=1}^N p_j p_{ji}  \hat e_{n_{i}}(\hat \mu_j) : n_{i}\geq 1, \sum_{i=1}^N n_{i}\leq n \Big\}.\]
\end{lemma}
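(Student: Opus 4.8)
The plan is to reduce the global estimate on $\hat e_n(\mu)$ to the local estimates on the conditional measures $\hat\mu_i$ provided by Lemma~\ref{lemma462}, exploiting the decomposition $\mu=\sum_{i=1}^N p_i\hat\mu_i$. Fix $n\in\D N$ and integers $n_i\geq 1$ with $\sum_{i=1}^N n_i\leq n$, and for each $i$ choose an optimal set $\ga_i\in\C C_{n_i}(\hat\mu_i)$; these exist by Lemma~\ref{lemma00}. Set $\ga=\UU_{i=1}^N S_i(\ga_i)$. Then $\te{card}(\ga)\leq\sum_{i=1}^N n_i\leq n$, so $\ga$ is a competitor in the definition of $\hat e_n(\mu)$.

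The first main step is to compute $\int\log d(x,\ga)\,d\mu$ using the self-referential identity $\mu=\sum_{i=1}^N\sum_{j=1}^N p_jp_{ji}\,\hat\mu_j\circ S_i^{-1}$. Exactly as in the proof of Lemma~\ref{lemma462}, I would write
\[
\hat e_n(\mu)\leq\int\log d(x,\ga)\,d\mu
=\sum_{i=1}^N\sum_{j=1}^N p_jp_{ji}\int\log d\big(S_i(x),\ga\big)\,d\hat\mu_j(x),
\]
and then bound $d(S_i(x),\ga)\leq d(S_i(x),S_i(\ga_i))\leq\ol s_i\,d(x,\ga_i)$, using that $S_i(\ga_i)\subseteq\ga$ and that $S_i$ has contraction ratio at most $\ol s_i$. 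Taking logarithms and integrating gives
\[
\int\log d(S_i(x),\ga)\,d\hat\mu_j\leq\log\ol s_i+\int\log d(x,\ga_i)\,d\hat\mu_j.
\]
Here I must be careful with the index bookkeeping: the term $\log\ol s_i$ depends on $i$, while $\ga_i$ is the optimal set for $\hat\mu_i$ but appears integrated against $\hat\mu_j$ — wait, this is the subtle point, so let me restate. Summing over $i$ the weights $\sum_{j=1}^N p_jp_{ji}=p_i$ collapse the $\log\ol s_i$ contributions to $\sum_{i=1}^N p_i\log\ol s_i$, giving the first term in the statement. For the remaining terms, I reorganize the double sum so that $\ga_i$ is paired with $\hat\mu_i$: note $d(S_i(x),\ga)\leq d(S_i(x),S_i(\ga_j))=$ (contraction of $S_i$)$\,d(x,\ga_j)$, so one can instead bound the inner integral over $\hat\mu_j$ by $\log\ol s_i+\int\log d(x,\ga_j)\,d\hat\mu_j=\log\ol s_i+\hat e_{n_j}(\hat\mu_j)$. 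After swapping the roles of the summation indices to match the statement's indexing ($n_i$ indexing the cardinalities, $\hat\mu_j$ the measures), the double sum becomes $\sum_{i,j=1}^N p_jp_{ji}\,\hat e_{n_i}(\hat\mu_j)$ — here $\ga_j$ is chosen in $\C C_{n_i}(\hat\mu_j)$, with the convention on indices matching Lemma~\ref{lemma462} composed with the outer decomposition.

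The final step is to take the minimum over all admissible choices of $(n_1,\dots,n_N)$ with $\sum n_i\leq n$, which yields precisely the asserted inequality
\[
\hat e_n(\mu)\leq\sum_{i=1}^N p_i\log\ol s_i+\min\Big\{\sum_{i,j=1}^N p_jp_{ji}\,\hat e_{n_i}(\hat\mu_j):n_i\geq 1,\ \sum_{i=1}^N n_i\leq n\Big\}.
\]
The one genuine obstacle is the index matching between the outer IFS decomposition of $\mu$ (summed over the map index $i$ and the component index $j$) and the statement's chosen labelling of the $n_i$'s; everything else is a routine repetition of the contraction-plus-logarithm estimate already carried out in Lemma~\ref{lemma462}, now applied to $\mu$ rather than to an individual $\hat\mu_i$. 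I would double-check that the cardinality count $\te{card}(\UU_i S_i(\ga_i))\leq\sum_i n_i$ is not spoiled by overlaps (it is not, since cardinality is subadditive regardless of separation), so no appeal to the strong separation condition is needed for this particular lemma.
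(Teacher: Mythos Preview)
Your overall strategy is exactly the paper's: build a competitor set by pushing forward optimal sets under the maps $S_i$, plug into the decomposition $\mu=\sum_{i,j}p_jp_{ji}\,\hat\mu_j\circ S_i^{-1}$, and use the contraction bound $d(S_i(x),S_i(\cdot))\leq\ol s_i\,d(x,\cdot)$. The gap is precisely the index issue you yourself flag but do not actually resolve.

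With your initial choice $\ga_i\in\C C_{n_i}(\hat\mu_i)$, the integral $\int\log d(x,\ga_i)\,d\hat\mu_j$ for $j\neq i$ is only $\geq\hat e_{n_i}(\hat\mu_j)$, which is the wrong direction for an upper bound. Your second attempt, bounding via $S_i(\ga_j)$ with $\ga_j\in\C C_{n_j}(\hat\mu_j)$, produces $\hat e_{n_j}(\hat\mu_j)$ rather than the $\hat e_{n_i}(\hat\mu_j)$ demanded by the statement. Your final move, declaring that ``$\ga_j$ is chosen in $\C C_{n_i}(\hat\mu_j)$'', silently makes the optimal set depend on \emph{both} $i$ and $j$, yet you never go back and rebuild the competitor $\ga$ or recount its cardinality under this change; as written, $\ga=\bigcup_i S_i(\ga_i)$ no longer has a well-defined meaning. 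The paper avoids these detours by taking $\ga_{ij}\in\C C_{n_i}(\hat\mu_j)$ from the outset---a set of $n_i$ points optimal for $\hat\mu_j$---so that $\int\log d(x,\ga_{ij})\,d\hat\mu_j=\hat e_{n_i}(\hat\mu_j)$ on the nose, and then forms the competitor $\bigcup_{i} S_i(\ga_{ij})$. Once that choice is made, the contraction estimate and the summation go through verbatim as in Lemma~\ref{lemma462}; what you are missing is committing to the doubly-indexed family $\ga_{ij}$ before writing down $\ga$, rather than trying to patch a singly-indexed choice after the fact.
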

\begin{proof} Let $n_{i} \in \D N$ with $\sum_{i=1}^N n_{i}\leq n$. Let $\ga_{ij} \in \C C_{n_{i}}(\hat \mu_j)$. Recall that $\mu=\sum_{i,j=1}^N p_j p_{ji}\hat \mu_j\circ S_i^{-1}$, and notice that $\te{card}(\uu_{i=1}^N S_i(\ga_{ij})) \leq \sum_{i=1}^N n_{i}\leq n$. Thus, the rest of the proof follows in the similar way as the proof of Lemma~\ref{lemma462}.
\end{proof}

\begin{lemma} \label{lemma10}
Let $\gG\sci W$ be a finite maximal antichain. Let $C>\ga_2$ be arbitrary. Then, for all $n\geq |\gG|$,
\begin{equation*} \label{eq22} \hat e_n (\mu) \leq \frac 1 C\sum_{\gs \in \gG} p_\gs\log p_\gs +\min\left\{\sum_{\gs \in\gG}p_\gs \hat e_{n_\gs}(\hat \mu_{\gs_{|\gs|}}) : n_\gs \geq 1, \, \sum_{\gs\in \gG} n_\gs \leq n\right \}.\end{equation*}
\end{lemma}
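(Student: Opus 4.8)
The plan is to induct on the maximal length of a word appearing in $\gG$, reducing the general finite maximal antichain to the base case already handled by Lemma~\ref{lemma46}. First observe that if every word of $\gG$ has length exactly two, then $\gG$ consists of all admissible words of length two, so $\sum_{\gs\in\gG} p_\gs\log p_\gs = \sum_{i,j} p_j p_{ji}\log(p_j p_{ji})$; comparing this with the statement of Lemma~\ref{lemma46} and using that $C > \ga_2 = \big(\sum_{i,j}p_ip_{ij}\log p_{ij}\big)/\big(\sum_i p_i\log\ol s_i\big)$ (so that $\frac1C\sum_{i,j}p_jp_{ji}\log(p_jp_{ji}) \ge \sum_i p_i\log\ol s_i + \frac1C\sum_{i,j}p_jp_{ji}\log(\cdots)$ after the appropriate rearrangement) gives exactly the asserted inequality. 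So the base case is essentially Lemma~\ref{lemma46}, modulo bookkeeping with the stationary vector.

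For the inductive step, let $m = \max\{|\gs| : \gs\in\gG\} \ge 3$ and pick any word $\gt\in\gG$ of maximal length $m$. Consider its parent $\gt^- = (\gt_1,\dots,\gt_{m-1})$. Replacing in $\gG$ the whole block of children of $\gt^-$ (i.e. all $\gt^- k$ with $p_{k\,\gt_{m-1}}>0$, which all lie in $\gG$ because $\gG$ is a maximal antichain and $\gt^-$ has no shorter extension in $\gG$) by the single word $\gt^-$ produces a new finite maximal antichain $\gG'$ with strictly fewer words of length $m$, hence eventually of smaller maximal length. The key algebraic point is the local collapse identity: for any fixed $\gt^-$,
\[
\sum_{k:\,p_{k\,\gt_{m-1}}>0} p_{\gt^- k}\log p_{\gt^- k}
= p_{\gt^-}\log p_{\gt^-} + p_{\gt^-}\sum_{k:\,p_{k\,\gt_{m-1}}>0} p_{k\,\gt_{m-1}}\log p_{k\,\gt_{m-1}},
\]
using $p_{\gt^- k} = p_{\gt^-} p_{k\,\gt_{m-1}}$ (from the definitions of $p_\go$ and $P_\go$) together with $\sum_k p_{k\,\gt_{m-1}} = 1$. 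The second sum on the right is $\le \sum_i p_i\log\ol s_i \cdot(\text{something})$... more precisely it is bounded using $C > \ga_2$ so that $\frac1C$ times it dominates $\log\ol s_{\gt_{m-1}}$-type terms. Then Lemma~\ref{lemma462} (applied with $i = \gt_{m-1}$) lets us bound $\hat e_{n_{\gt^-}}(\hat\mu_{\gt_{m-1}})$ by $\log\ol s_{\gt_{m-1}}$ plus a convex combination of the $\hat e_{n_k}(\hat\mu_k)$, which is exactly the term that appears when we pass from $\gG'$ back to $\gG$ in the minimum. Distributing a budget $n_{\gt^-}$ for $\gG'$ into budgets $\{n_{\gt^- k}\}$ summing to $n_{\gt^-}$ shows the two minima match up to the collapse identity, and combining with the induction hypothesis applied to $\gG'$ closes the step.

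The main obstacle I expect is the constant bookkeeping: one must verify that at each collapse the ``loss'' term $\log\ol s_{\gt_{m-1}}$ produced by Lemma~\ref{lemma462} is correctly absorbed into the $\frac1C p_\gs\log p_\gs$ terms, and this is exactly where the hypothesis $C>\ga_2$ (rather than $C=\ga_2$) is used — it gives the slack needed because $\ga_2$ is the precise ratio making $\sum_i p_i\log\ol s_i = \frac1{\ga_2}\sum_{i,j}p_ip_{ij}\log p_{ij}$, so any $C>\ga_2$ yields $\frac1C\sum p_\gs\log p_\gs \ge$ the accumulated $\sum\log\ol s$-contributions (note $\sum_{i,j}p_ip_{ij}\log p_{ij}<0$ and $\log\ol s_i<0$). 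Making the telescoping of these contributions over all collapses precise, while tracking that the antichain property is preserved at each step, is the technical heart; the rest is a routine induction combined with the two preceding lemmas.
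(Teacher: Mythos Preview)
Your overall strategy matches the paper's: induct on $\ell(\Gamma)=\max\{|\sigma|:\sigma\in\Gamma\}$, collapse maximal-length words to their parents to obtain a shorter antichain, invoke Lemma~\ref{lemma462} to relate the $\hat e$-terms across the collapse, and use Lemma~\ref{lemma46} for the base case. The only tactical difference is that the paper collapses \emph{all} maximal-length words in one step (setting $\Gamma_0=\Gamma_1\cup\Gamma_2$ with $\Gamma_2$ the set of all parents of length-$\ell(\Gamma)$ words), whereas you propose to peel off one parent's block at a time.

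There is, however, a concrete error in your ``local collapse identity''. From the definition $p_\omega=p_{\omega_n}p_{\omega_n\omega_{n-1}}\cdots p_{\omega_2\omega_1}$ one gets, for $\tau^-=(\tau_1,\dots,\tau_{m-1})$,
\[
p_{\tau^- k}\;=\;p_k\,p_{k\tau_{m-1}}\,P_{\tau^-}\;=\;\frac{p_k\,p_{k\tau_{m-1}}}{p_{\tau_{m-1}}}\,p_{\tau^-},
\]
\emph{not} $p_{\tau^-}\,p_{k\tau_{m-1}}$ as you wrote. Likewise your claim $\sum_k p_{k\tau_{m-1}}=1$ is false: $P$ is row stochastic, not column stochastic; what holds is $\sum_k p_k p_{k\tau_{m-1}}=p_{\tau_{m-1}}$ by stationarity of $(p_1,\dots,p_N)$. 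With the corrected weights the entropy split becomes
\[
\sum_{k} p_{\tau^- k}\log p_{\tau^- k}
= p_{\tau^-}\log p_{\tau^-}
+ \frac{p_{\tau^-}}{p_{\tau_{m-1}}}\sum_{k} p_k\,p_{k\tau_{m-1}}\log\!\frac{p_k\,p_{k\tau_{m-1}}}{p_{\tau_{m-1}}}.
\]
For your inductive step to close you would then need, with $i=\tau_{m-1}$, the \emph{per-index} inequality
\[
\tfrac1C\sum_k p_k p_{ki}\log\!\Big(\tfrac{p_k p_{ki}}{p_i}\Big)\;\ge\;p_i\log\ol s_i,
\]
and this does not follow from $C>\alpha_2$: that hypothesis only gives the inequality after summing over all $i$. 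Your one-parent-at-a-time reduction cannot access the aggregate, so as written the step does not go through. This is exactly why the paper collapses all of $\Gamma_2$ simultaneously and works with a single aggregated inequality over $\Gamma_2$ rather than an inequality for each fixed last letter; if you want to salvage the argument you should switch to the simultaneous collapse.
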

\begin{proof}
Write $\ell(\gG)=\max \set{|\gs| : \gs \in \gG}$. We will prove the lemma by induction on $\ell(\gG)$. If $\ell(\gG)=0$, then $\ell(\gG)=\set{\es}$, and so, the lemma is obviously true. If $\ell(\gG)=1$, then the lemma is true by Lemma~\ref{lemma46}. Next, let $\ell(\gG)=k+1$, and assume that the lemma has been proved for all finite maximal antichains $\gG'$ with $\ell(\gG')\leq k$ for some $k\geq 1$. Define
\begin{align*}
\gG_1&=\set{\gs \in \gG : |\gs| <\ell(\gG)}, \\
\gG_2&=\set{\gs^-  : \gs \in \gG \te{ and } |\gs| =\ell(\gG)},
\end{align*}
and \[\gG_0=\gG_1\UU \gG_2.\]
It is easy to see that $\gG_0$ is a finite maximal antichain with $\ell(\gG_0) \leq k$.  Let $\gs\ast j$ denote the word $\gs j$. Then, for $n\geq |\gG|$ and $(n_\gs)_{\gs \in \gG}$ with $n_\gs \geq 1$ and $\sum_{\gs \in \gG} n_\gs \leq n$,  we have
\begin{align*}
&a:=\frac 1 C \sum_{\gs \in \gG} p_\gs\log p_{\gs} +\sum_{\gs \in\gG}p_\gs \hat e_{n_\gs}(\hat \mu_{\gs_{|\gs|}})\\
&=\frac 1 C\Big [\sum_{\gs \in \gG_1} p_\gs\log p_{\gs}+\sum_{\gs
\in \gG_2}\sum_{j=1}^N  p_jp_{j\gs_{|\gs|}} p_{\gs_{|\gs|}
\gs_{|\gs|-1}}   \cdots p_{\gs_{2} \gs_1}
\Big(\log(p_jp_{j\gs_{|\gs|}})
 + \log(p_{\gs_{|\gs|} \gs_{|\gs|-1}}   \cdots p_{\gs_{2} \gs_1})\Big)\Big]\\&+\sum_{\gs \in\gG_1}p_\gs \hat e_{n_\gs}(\hat \mu_{\gs_{|\gs|}})+\sum_{\gs \in\gG_2}\sum_{j=1}^N   p_jp_{j\gs_{|\gs|}} p_{\gs_{|\gs|} \gs_{|\gs|-1}}   \cdots p_{\gs_{2} \gs_1}  \hat e_{n_{\gs\ast j}}(\hat \mu_j)\\
&=\frac 1 C\Big[\sum_{\gs \in \gG_1} p_\gs\log p_{\gs}+\sum_{\gs
\in \gG_2} p_{\gs_{|\gs|} \gs_{|\gs|-1}}   \cdots p_{\gs_{2} \gs_1}
\Big(\sum_{j=1}^N  p_jp_{j\gs_{|\gs|}}\log(p_jp_{j\gs_{|\gs|}})\Big)
+ \sum_{\gs \in\gG_2} p_\gs \log P_\gs \Big]\\&+\sum_{\gs
\in\gG_1}p_\gs \hat e_{n_\gs}(\hat \mu_{\gs_{|\gs|}})+\sum_{\gs
\in\gG_2} p_{\gs_{|\gs|} \gs_{|\gs|-1}}   \cdots p_{\gs_{2} \gs_1}
\Big(\sum_{j=1}^N   p_jp_{j\gs_{|\gs|}}\hat e_{n_{\gs\ast j}}(\hat
\mu_j)\Big).
\end{align*}
If we set $n_\gs=\sum_{j=1}^Nn_{\gs \ast j}$ for $\gs \in \gG_2$, by Lemma~\ref{lemma462}, we obtain
\begin{equation} \label{eq2341} \sum_{j=1}^N   p_jp_{j\gs_{|\gs|}}\hat e_{n_{\gs\ast j}}(\hat \mu_j) \geq  p_{\gs_{|\gs|}}\hat e_{n_\gs}(\hat \mu_{\gs_{|\gs|}})- p_{\gs_{|\gs|}}\log \ol s_{\gs_{|\gs|}},\end{equation}
which yields
\begin{align*} & \sum_{\gs \in\gG_2} p_{\gs_{|\gs|} \gs_{|\gs|-1}}   \cdots p_{\gs_{2} \gs_1}  \Big(\sum_{j=1}^N   p_jp_{j\gs_{|\gs|}}\hat e_{n_{\gs\ast j}}(\hat \mu_j)\Big)
\geq \sum_{\gs \in \gG_2} p_\gs\hat e_{n_\gs}(\hat \mu_{\gs_{|\gs|}})-\sum_{\gs \in \gG_2} p_\gs\log \ol s_{\gs_{|\gs|}}.\end{align*}
By the hypothesis,  $C>\frac{\sum_{i=1}^N \sum_{j=1}^N p_i p_{ij} \log p_{ij}}{\sum_{i=1}^N  p_i \log \ol s_{i}}\geq \frac{\sum_{i=1}^N \sum_{j=1}^N p_i p_{ij} \log (p_ip_{ij})}{\sum_{i=1}^N  p_i \log \ol s_{i}}$, which implies the fact that
 \[\sum_{i=1}^N\Big (\sum_{j=1}^N p_jp_{ji}\log(p_j p_{ji})
- C p_i\log \ol s_{i}\Big)>0.\] To prove the lemma, we assume that
\begin{equation} \label{eq2342}
\sum_{\gs
\in \gG_2} p_{\gs_{|\gs|} \gs_{|\gs|-1}}   \cdots p_{\gs_{2} \gs_1}
\Big(\sum_{j=1}^N  p_jp_{j\gs_{|\gs|}}\log(p_jp_{j\gs_{|\gs|}})- C
p_{\gs_{|\gs|}}\log \ol s_{\gs_{|\gs|}}\Big)\geq 0.
\end{equation}
 Again, for any $\gs \in \gG_2$, we have $P_\gs\geq p_\gs$. Thus, using \eqref{eq2341} and \eqref{eq2342}, we have
\begin{align*}
a &\geq \frac 1 C\Big [\sum_{\gs \in \gG_1} p_\gs\log p_\gs+C
\sum_{\gs \in \gG_2} p_\gs  \log \ol s_{\gs_{|\gs|}}+\sum_{\gs \in
\gG_2} p_\gs \log p_\gs\Big ]+\sum_{\gs \in\gG_1}p_\gs \hat
e_{n_\gs}(\hat \mu_{\gs_{|\gs|}})
\\
&\qquad \qquad+\sum_{\gs \in \gG_2} p_\gs\hat e_{n_\gs}(\hat \mu_{\gs_{|\gs|}})-\sum_{\gs \in \gG_2} p_\gs\log \ol s_{\gs_{|\gs|}}\\
 &=\frac 1 C\sum_{\gs \in \gG_0} p_\gs\log p_\gs+\sum_{\gs \in\gG_0}p_\gs \hat e_{n_\gs}(\hat \mu_{\gs_{|\gs|}}).
\end{align*}
Since
\[\sum_{\gs \in \gG_0}n_\gs =\sum_{\gs \in \gG_1}n_\gs+\sum_{\gs \in \gG_2}n_\gs\leq n,\]
by the induction hypothesis, we obtain
\[\hat e_n (\mu) \leq \frac 1 C\sum_{\gs \in \gG_0} p_\gs\log p_\gs +\sum_{\gs \in\gG_0}p_\gs \hat e_{n_\gs}(\hat \mu_{\gs_{|\gs|}})\leq a,\] which yields the lemma.

\end{proof}

The following lemma plays an important role in the paper.
\begin{lemma}
Let $\hat p_{\min}=\min\Big\{\min \big\{p_j : 1\leq j\leq N\big\}, \min
\big\{p_{ij} : p_{ij}>0, \, 1\leq i, j\leq N\big\}\Big\}$. For
$0<\ep<1$, write
\[\gG(\ep) =\set{\gs\in W : p_{\gs^-} \geq \ep>p_\gs},\]
where $\gs^-$ is the word obtained from $\gs$ by deleting the last letter of $\gs$. Let $m, n\in \D N$ with $m$ fixed and $\frac m n<\hat p_{\min}^2$. Write $\ep_n=\frac m n \hat p_{\min}^{-2}$. Then,
\begin{equation} \label{eq31} \hat e_n(\mu) \leq \frac 1 {C} \sum_{\gs \in \gG(\ep_n)}p_\gs \log p_\gs +\sum_{j=1}^N \hat e_m(\hat \mu_j).\end{equation}
\end{lemma}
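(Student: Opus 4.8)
The plan is to deduce \eqref{eq31} from Lemma~\ref{lemma10} applied to the finite maximal antichain $\gG(\ep_n)$, after verifying that $\gG(\ep_n)$ really is a finite maximal antichain and that $n$ is large enough relative to $|\gG(\ep_n)|$ so that the hypothesis $n \geq |\gG(\ep_n)|$ of Lemma~\ref{lemma10} is met. First I would check that $\gG(\ep_n)$ is a finite maximal antichain: every infinite admissible sequence $\go\in\gO$ has $p_{\go|_k}\to 0$ as $k\to\infty$ (since each factor is at most $P_{\max}<1$), so there is a unique least $k$ with $p_{\go|_k}<\ep_n$, and $\go|_k\in\gG(\ep_n)$; no element of $\gG(\ep_n)$ extends another, because if $\gs$ extends $\gt$ properly then $p_\gs < p_{\gt^-}$ would fail the defining inequality for one of them; and finiteness follows since all words in $\gG(\ep_n)$ have bounded length (at least $2$ and at most $\log\ep_n/\log P_{\max}+1$ or so).

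Next I would bound $|\gG(\ep_n)|$ from above. For $\gs\in\gG(\ep_n)$ we have $p_\gs < \ep_n$ while $p_{\gs^-}\geq \ep_n$; since $p_\gs = p_{\gs_{|\gs|}} P_\gs$ and $p_{\gs^-}=p_{\gs_{|\gs|-1}}P_{\gs^-}$ differ by a factor $p_{\gs_{|\gs|}\gs_{|\gs|-1}}$ (times a ratio of stationary weights), and every nonzero such factor and every $p_j$ is at least $\hat p_{\min}$, we get $p_\gs \geq \hat p_{\min}^2\, p_{\gs^-}\geq \hat p_{\min}^2 \ep_n$. The cylinders $[\gs]$, $\gs\in\gG(\ep_n)$, are pairwise disjoint with union $\gO$, so their $\gn$-measures sum to $1$; since $\gn[\gs]=p_\gs\geq \hat p_{\min}^2\ep_n$, this forces $|\gG(\ep_n)|\leq (\hat p_{\min}^2\ep_n)^{-1}$. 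With $\ep_n = \frac{m}{n}\hat p_{\min}^{-2}$ this gives exactly $|\gG(\ep_n)|\leq n/m \leq n$, so the hypothesis $n\geq |\gG(\ep_n)|$ of Lemma~\ref{lemma10} holds (here $C>\ga_2$ is the constant from that lemma, which we are free to keep as the same $C$ appearing in \eqref{eq31}).

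Then I would invoke Lemma~\ref{lemma10} with this antichain: for any choice of $n_\gs\geq 1$ with $\sum_{\gs\in\gG(\ep_n)} n_\gs\leq n$,
\[
\hat e_n(\mu) \leq \frac 1 C \sum_{\gs\in\gG(\ep_n)} p_\gs\log p_\gs + \sum_{\gs\in\gG(\ep_n)} p_\gs\, \hat e_{n_\gs}(\hat\mu_{\gs_{|\gs|}}).
\]
It remains to make a good choice of the $n_\gs$. Since $|\gG(\ep_n)|\leq n/m$, I would set $n_\gs = m$ for every $\gs\in\gG(\ep_n)$; then $\sum_\gs n_\gs = m\,|\gG(\ep_n)|\leq n$, so the choice is admissible, and the second sum becomes $\sum_{\gs\in\gG(\ep_n)} p_\gs\, \hat e_m(\hat\mu_{\gs_{|\gs|}})$. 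Grouping the words $\gs\in\gG(\ep_n)$ by their last letter $j=\gs_{|\gs|}$ and using $\sum_{\gs : \gs_{|\gs|}=j} p_\gs \leq \sum_{\gs\in W,\ \gs_{|\gs|}=j} p_\gs = $ (the $\gn$-measure of $[\,\cdot\, j]$-type cylinders, which telescopes to at most $1$, in fact exactly the mass giving $p_j\leq 1$), we bound $\sum_{\gs\in\gG(\ep_n)} p_\gs\,\hat e_m(\hat\mu_{\gs_{|\gs|}}) \leq \sum_{j=1}^N \hat e_m(\hat\mu_j)$ — using here that $\hat e_m(\hat\mu_j)\leq 0$ would make the inequality go the wrong way, so one actually argues via $\sum_{\gs:\gs_{|\gs|}=j}p_\gs\le 1$ together with the sign bookkeeping, or simply notes $p_\gs\le 1$ termwise and that there are at most $N$ distinct values of $j$ contributing, each with total weight $\le 1$. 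This yields \eqref{eq31}.

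The main obstacle I anticipate is the estimate $|\gG(\ep_n)|\leq n/m$: one must be careful that the "two-step" loss $\hat p_{\min}^2$ genuinely accounts for the gap between $p_\gs$ and $p_{\gs^-}$, since $p_\gs$ is not simply $p_{\gs^-}$ times a transition probability but also involves the stationary-weight ratio $p_{\gs_{|\gs|}}/p_{\gs_{|\gs|-1}}$ — this is exactly why $\hat p_{\min}$ is defined to be the minimum over \emph{both} the $p_j$'s and the positive $p_{ij}$'s, and why the power is $2$ rather than $1$. A secondary subtlety is the grouping step at the end: one needs $\sum_{\gs\in\gG(\ep_n),\ \gs_{|\gs|}=j} p_\gs\le 1$ (which holds because these cylinders are disjoint subsets of $\gO$), and then that the number of distinct last letters is at most $N$, to collapse the sum over $\gG(\ep_n)$ into the sum $\sum_{j=1}^N\hat e_m(\hat\mu_j)$ on the right-hand side of \eqref{eq31}.
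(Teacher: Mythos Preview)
Your approach is essentially the paper's own: verify that $\gG(\ep_n)$ is a finite maximal antichain, use $p_\gs \geq \hat p_{\min}^{2}\, p_{\gs^-} \geq \hat p_{\min}^{2}\ep_n$ together with $\sum_{\gs\in\gG(\ep_n)} p_\gs = 1$ to get $|\gG(\ep_n)|\le n/m$, apply Lemma~\ref{lemma10} with $n_\gs = m$, and collapse the resulting sum by grouping on the last letter $\gs_{|\gs|}$. The paper handles the final collapsing step via the product inequality $\sum_{j}\big(\sum_{\gs\in\gG(\ep_n,j)}p_\gs\big)\hat e_m(\hat\mu_j)\le \big(\sum_{j}\sum_{\gs}p_\gs\big)\big(\sum_j \hat e_m(\hat\mu_j)\big)=\sum_j \hat e_m(\hat\mu_j)$, which is exactly the sign-sensitive step you flag; your identification of this as the delicate point is accurate and mirrors what the paper does.
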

\begin{proof}
We claim that $\gG(\ep_n)$ is a finite maximal antichain. To prove the claim we proceed as follows:
For $(\gs_1, \gs_2, \cdots) \in \gO$, let $k(\gs)$ be the least positive integer such that
\[p_{(\gs_1, \gs_2, \cdots, \gs_{k(\gs))}}<\ep_n\leq p_{(\gs_1, \gs_2, \cdots, \gs_{k(\gs)-1})}.\]
Then, $(\gs_1, \gs_2, \cdots, \gs_{k(\gs)}) \in \gG(\ep_n)$, and $(\gs_1, \gs_2, \cdots)$ is an extension of $(\gs_1, \gs_2, \cdots, \gs_{k(\gs)})$. Next, let  $(\go_1, \go_2, \cdots, \go_k)$, and $(\gt_1, \gt_2, \cdots, \gt_\ell)$ be two elements in $\gG(\ep_n)$. We show that they are not extensions of each other. For the sake of contradiction, assume that $(\gt_1, \gt_2, \cdots, \gt_\ell)$ is an extension of $(\go_1, \go_2, \cdots, \go_k)$. Then, $k<\ell$, and
\begin{equation*} \label{eq77} (\gt_1, \gt_2, \cdots, \gt_k)=(\go_1, \go_2, \cdots, \go_k).
\end{equation*}
Again, as $(\go_1, \go_2, \cdots, \go_k)\in \gG(\ep_n)$, we have
\[p_{(\go_1, \go_2, \cdots, \go_k)}<\ep_n\leq (\go_1, \go_2, \cdots, \go_{k-1}),\]
yielding \[p_{(\gt_1, \gt_2, \cdots, \gt_k)}<\ep_n\leq p_{(\gt_1, \gt_2, \cdots, \gt_{k-1})},\]
which contradicts the fact that $\ell$ is the least positive integer for which
 \[p_{(\gt_1, \gt_2, \cdots, \gt_\ell)}<\ep_n\leq p_{(\gt_1, \gt_2, \cdots, \gt_{\ell-1})}\]
 as $(\gt_1, \gt_2, \cdots, \gt_\ell)\in \gG(\ep_n)$. Similarly, we can show that if $(\go_1, \go_2, \cdots, \go_k)$ is an extension of $(\gt_1, \gt_2, \cdots, \gt_\ell)$, then a contradiction arises. Hence, any two words in $\gG(\ep_n)$ are not extensions of each other.
 Thus, we see that $\gG(\ep_n)$ is a finite maximal antichain, which is the claim.

 Notice that for any word $\gs=(\gs_1, \gs_2,\cdots, \gs_{|\gs|})$, we have
 \[\frac{p_\gs} {p_{\gs^{-}}}=\frac {p_{\gs_{|\gs|}}p_{\gs_{|\gs|}\gs_{|\gs|-1}}}{p_{\gs_{|\gs|-1}}}>p_{\gs_{|\gs|}}p_{\gs_{|\gs|}\gs_{|\gs|-1}}\geq \hat p_{\min}^{2}.\]
Hence, if $\gs \in \gG(\ep_n)$, we have
\[1=\sum_{\gs \in \gG(\ep_n)} p_\gs \geq \sum_{\gs \in \gG(\ep_n)} p_{\gs^{-}}  \hat p_{\min}^{2}\geq  \sum_{\gs \in \gG(\ep_n)} \ep_n \hat p_{\min}^2=\frac m n |\gG(\ep_n)|,  \]
which implies $n\geq m  |\gG(\ep_n)|$. Write $\gG(\ep_n, j)=\set{(\gs_1, \cdots, \gs_{|\gs|}) \in \gG(\ep_n) : \gs_{|\gs|}=j}$, and then $\gG(\ep_n)=\UU_{j=1}^N \gG(\ep_n, j)$. Choosing $n_\gs=m$ for every $\gs \in \gG(\ep_n)$ in Lemma~\ref{lemma10}, we have
\begin{align*} \hat e_n (\mu) & \leq \frac 1 {C}\sum_{\gs \in \gG(\ep_n)} p_\gs\log p_\gs +\sum_{\gs \in\gG(\ep_n)}p_\gs \hat e_{m}(\hat \mu_{\gs_{|\gs|}})=\frac 1 {C}\sum_{\gs \in \gG(\ep_n)} p_\gs\log p_\gs +\sum_{j=1}^N \sum_{\gs \in\gG(\ep_n, j)}p_\gs \hat e_{m}(\hat \mu_j)\\
&\leq \frac 1 {C}\sum_{\gs \in \gG(\ep_n)} p_\gs\log p_\gs +\Big (\sum_{j=1}^N \sum_{\gs \in\gG(\ep_n, j)}p_\gs\Big) \Big(\sum_{j=1}^N \hat e_{m}(\hat \mu_j)\Big)\\
&=\frac 1 {C} \sum_{\gs \in \gG(\ep_n)}p_\gs \log p_\gs +\sum_{j=1}^N \hat e_m(\hat \mu_j),
\end{align*}
and thus the lemma is proved.
\end{proof}

Let us now prove the following proposition.
\begin{prop} \label{prop111}
Let $C>\ga_2$ be arbitrary. Then,
\[\limsup_{n\to \infty} n^{1/C} e_n(\mu)\leq \hat p_{\min}^{-2/C}\inf_{m\geq 1} m^{1/C}\prod_{j=1}^N e_m(\hat \mu_j)<+\infty.\]
\end{prop}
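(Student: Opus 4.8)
The plan is to run the recursion from the previous lemma with a suitable choice of $m$ and $n$, let $n \to \infty$, and then optimize over $m$. Recall that $\hat e_n(\mu) = \log e_n(\mu)$, so exponentiating the inequality \eqref{eq31} converts the additive recursion into a multiplicative one. The key identity to establish first is that for the finite maximal antichain $\gG(\ep_n)$ one has
\[
\sum_{\gs \in \gG(\ep_n)} p_\gs \log p_\gs \;\leq\; \log \ep_n,
\]
since $p_\gs < \ep_n$ for every $\gs \in \gG(\ep_n)$ and $\sum_{\gs \in \gG(\ep_n)} p_\gs = 1$. (In fact $\sum p_\gs \log p_\gs = \sum p_\gs \log p_\gs \le (\log \ep_n)\sum p_\gs = \log \ep_n$.) Substituting this bound into \eqref{eq31} gives, for $m$ fixed and $n$ large with $\tfrac m n < \hat p_{\min}^2$ and $\ep_n = \tfrac m n \hat p_{\min}^{-2}$,
\[
\hat e_n(\mu) \;\leq\; \frac{1}{C}\log\!\Big(\frac m n \hat p_{\min}^{-2}\Big) \;+\; \sum_{j=1}^N \hat e_m(\hat \mu_j).
\]

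Next I would add $\tfrac1C \log n$ to both sides to handle the quantization-coefficient normalization. This yields
\[
\log n + C\,\hat e_n(\mu) \;\leq\; \log\!\big(m\,\hat p_{\min}^{-2}\big) \;+\; C\sum_{j=1}^N \hat e_m(\hat \mu_j),
\]
equivalently $\tfrac1C\log n + \hat e_n(\mu) \le \tfrac1C\log(m\hat p_{\min}^{-2}) + \sum_j \hat e_m(\hat\mu_j)$. Exponentiating and using $e_n(\mu) = \exp \hat e_n(\mu)$ (and $e_m(\hat\mu_j) = \exp \hat e_m(\hat\mu_j)$),
\[
n^{1/C}\, e_n(\mu) \;\leq\; \big(m\,\hat p_{\min}^{-2}\big)^{1/C} \prod_{j=1}^N e_m(\hat \mu_j) \;=\; \hat p_{\min}^{-2/C}\, m^{1/C} \prod_{j=1}^N e_m(\hat \mu_j).
\]
Since $m$ is fixed and the right-hand side does not depend on $n$, taking $\limsup_{n\to\infty}$ gives $\limsup_n n^{1/C} e_n(\mu) \le \hat p_{\min}^{-2/C} m^{1/C}\prod_j e_m(\hat\mu_j)$; now take the infimum over $m \ge 1$.

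Finally one must check the right-hand side is genuinely finite, i.e. $\inf_{m\ge1} m^{1/C}\prod_j e_m(\hat\mu_j) < +\infty$. It suffices to exhibit a single $m$ for which the expression is finite, and any fixed $m$ works: each $\hat\mu_i$ is a compactly supported probability measure, so $e_m(\hat\mu_i) < \infty$ for every $m \ge 1$ (choose any finite codebook $\ga$ with $\ga$ disjoint from the at most countably many atoms, giving $\int \log d(x,\ga)\,d\hat\mu_i < \infty$; for instance $e_1(\hat\mu_i) < \infty$ already). Hence the product over the finitely many $j$ is finite, and the infimum is finite. The main obstacle — and it is a mild one — is the bookkeeping ensuring the constraint $\tfrac m n < \hat p_{\min}^2$ (so that $\ep_n \in (0,1)$ and the previous lemma applies) holds for all sufficiently large $n$ once $m$ is fixed, which is immediate; everything else is the routine passage from the additive recursion to the multiplicative statement.
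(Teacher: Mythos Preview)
Your proof is correct and follows essentially the same route as the paper: both use the key observation that $p_\gs < \ep_n$ for $\gs \in \gG(\ep_n)$ together with $\sum_{\gs} p_\gs = 1$ to bound $\sum_{\gs} p_\gs \log p_\gs \le \log \ep_n$, substitute into \eqref{eq31}, and then take $\limsup$ in $n$ followed by the infimum over $m$. Your added remark on finiteness of the right-hand side is a welcome detail the paper leaves implicit (note, incidentally, that for $e_m(\hat\mu_j) < \infty$ compact support alone suffices and the atom-avoidance comment is not needed for the \emph{upper} bound on $\hat e_m$).
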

\begin{proof}
It is enough to prove that
\begin{equation} \label{eq32} \limsup_{n\to \infty} (\log n+C\hat e_n(\mu)) \leq  -2\log \hat p_{\min} +\inf_{m\geq 1}\Big(\log m +C\sum_{j=1}^N \hat e_m(\hat \mu_j)\Big).\end{equation}
Given $m\in \D N$, by \eqref{eq31}, for $\ep_n=\frac mn \hat p_{\min}^{-2}$, we obtain
\[\log n+C \hat e_n(\mu)\leq \sum_{\gs \in \gG(\ep_n)}p_\gs \log p_\gs-\log \ep_n -2\log \hat p_{\min}+\log m +C \sum_{j=1}^N \hat e_m(\hat \mu_j) \]
for all but finitely many $n$. To prove \eqref{eq32}, it is therefore enough to prove that
\[\limsup_{n\to \infty} \Big[\sum_{\gs \in \gG(\ep_n)} p_\gs \log p_{\gs}-\log \ep_n\Big ]\leq 0.\]
Since $p_\gs <\ep_n$ for all $\gs \in \gG(\ep_n)$, we have $\log \ep_n \geq \log p_{\gs}$, and hence
\[\sum_{\gs \in \gG(\ep_n)} p_\gs \log p_{\gs}\leq \log \ep_n,\]
which proves \eqref{eq32}, and thus the proposition follows.
\end{proof}

In order to prove Proposition~\ref{prop112} we need the following
lemma.
\begin{lemma}\label{lemma461}
Let the hyperbolic recurrent IFS satisfy the strong separation condition. Then,  for $1\leq i\leq N$,
\[\hat e_n(\hat \mu_i)\geq  \log \ul s_{i} +\max \left\{\frac{1}{p_i} \sum_{j=1}^N p_j  p_{ji}\hat e_{n_j}(\hat \mu_j) : n_j\geq 1, \sum_{j=1}^N n_j\leq n \right\},\]
for all but finitely many $n\in \D N$.
\end{lemma}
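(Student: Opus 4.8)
The plan is to reverse the strategy of Lemma~\ref{lemma462}: rather than \emph{assembling} an $n$-point set for $\hat\mu_i$ out of optimal sets for the $\hat\mu_j$, I would \emph{extract} from an optimal set for $\hat\mu_i$ an admissible allocation of points to the pieces $S_i(E_j)$ and bound from below. As in the proof of Theorem~\ref{th1}, I would first reduce to the case in which each $S_i$ is a similarity with ratio $s_i\in[\ul s_i,\ol s_i]$, so that $S_i$ maps convex hulls to convex hulls, $\ol{\te{conv}}(S_i(E_j))=S_i(\ol{\te{conv}}(E_j))$, and $d(S_i(y),S_i(z))\geq\ul s_i\,d(y,z)$. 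Throughout I use $\hat\mu_i=\frac1{p_i}\sum_{j=1}^N p_jp_{ji}\,\hat\mu_j\circ S_i^{-1}$, where under the strong separation condition the measures $\hat\mu_j\circ S_i^{-1}$ with $p_{ji}>0$ are carried by pairwise disjoint compact sets $S_i(E_j)$; put $\gd:=\min\{d(S_i(E_j),S_i(E_k)):j\neq k,\ p_{ji}p_{ki}>0\}>0$.

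The technical heart of the argument is the covering statement that, for $\ga\in\C C_n(\hat\mu_i)$, one has $\rho_n:=\sup_{x\in E_i}d(x,\ga)\to0$ as $n\to\infty$. I would prove this by a perturbation argument: since $e_n(\mu)<e_{n-1}(\mu)$, the optimal $\ga$ has exactly $n$ points with Voronoi cells of positive $\hat\mu_i$-mass, so a lightest cell has mass at most $1/n$; by the regularity estimate $\mu(B(x,\ep))\leq C\ep^\gh$ of Lemma~\ref{lemma00}, deleting its centre raises $\hat e_\cdot(\hat\mu_i)$ by at most $O(n^{-1}\log n)$, whereas adjoining the $\ga$-farthest point $x_n\in E_i$ lowers it by at least $(\log 3)\,\hat\mu_i\big(B(x_n,\rho_n/4)\big)$. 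Comparing the resulting $n$-point set with $\ga$ forces $\hat\mu_i\big(B(x_n,\rho_n/4)\big)\to0$; if $\rho_n\not\to0$, passing to a subsequence along which $x_n\to x_\infty\in E_i$ gives $\hat\mu_i(B(x_\infty,r))=0$ for some fixed $r>0$, contradicting $x_\infty\in E_i=\te{supp}\,\hat\mu_i$. This is the step I expect to require the most care.

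Granting the covering statement, fix $n$ so large that $\rho_n<\gd/2$, take $\ga\in\C C_n(\hat\mu_i)$ with Voronoi partition $\{V_a\}_{a\in\ga}$, and for $p_{ji}>0$ set $\ga^{(j)}:=\{a\in\ga:V_a\ii S_i(E_j)\neq\es\}$ and $n_j:=\te{card}(\ga^{(j)})$. Then $n_j\geq1$ (as $S_i(E_j)\neq\es$), and since $\rho_n<\gd/2$ no cell $V_a$ can meet two $\gd$-separated pieces (it would contain a point of $E_i$ at distance $\geq\gd/2$ from $\ga$), so the $\ga^{(j)}$ are pairwise disjoint and $\sum_{j}n_j\leq\te{card}(\ga)\leq n$; together with the irrelevant indices $j$ with $p_{ji}=0$ (for which the corresponding terms vanish) this is an admissible allocation in the sense of the statement. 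Moreover the nearest point of $\ga$ to each $x\in S_i(E_j)$ already lies in $\ga^{(j)}$, so $d(x,\ga)=d(x,\ga^{(j)})$ on $S_i(E_j)$.

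It then remains to estimate, using $\te{card}(\ga^{(j)})=n_j$:
\[
\hat e_n(\hat\mu_i)=\frac1{p_i}\sum_{j=1}^N p_jp_{ji}\int_{S_i(E_j)}\log d(x,\ga^{(j)})\,d(\hat\mu_j\circ S_i^{-1})(x)\ \geq\ \frac1{p_i}\sum_{j=1}^N p_jp_{ji}\,\hat e_{n_j}\big(\hat\mu_j\circ S_i^{-1}\big).
\]
For each $j$ an optimal $n_j$-set for $\hat\mu_j\circ S_i^{-1}$ may be taken inside $\ol{\te{conv}}(S_i(E_j))=S_i(\ol{\te{conv}}(E_j))$, since projecting onto this convex set containing the support does not increase the integral; writing it as $S_i(\gb')$ and using $d(S_i(y),S_i(\gb'))\geq\ul s_i\,d(y,\gb')$ yields $\hat e_{n_j}(\hat\mu_j\circ S_i^{-1})\geq\log\ul s_i+\hat e_{n_j}(\hat\mu_j)$. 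Hence
\[
\hat e_n(\hat\mu_i)\ \geq\ \log\ul s_i+\frac1{p_i}\sum_{j=1}^N p_jp_{ji}\,\hat e_{n_j}(\hat\mu_j),
\]
and since $(n_j)$ is an admissible allocation this gives the bound asserted in the lemma, valid for all but finitely many $n$. The only genuinely delicate ingredient is the covering statement of the second paragraph; the convexity/projection reduction and the contraction inequality $\hat e_{n_j}(\hat\mu_j\circ S_i^{-1})\geq\log\ul s_i+\hat e_{n_j}(\hat\mu_j)$ are routine.
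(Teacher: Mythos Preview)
Your overall strategy coincides with the paper's: take $\ga_n\in\C C_n(\hat\mu_i)$, show $\max_{x\in E_i}d(x,\ga_n)\to0$ so that for large $n$ the sets $\ga_{n,j}:=\{a\in\ga_n:W(a|\ga_n)\cap S_i(E_j)\neq\es\}$ are nonempty and pairwise disjoint, then use $\hat\mu_i=\frac1{p_i}\sum_j p_jp_{ji}\,\hat\mu_j\circ S_i^{-1}$ together with the lower contraction bound. For the covering statement the paper simply invokes \cite[Lemma~5.9]{GL2}; your perturbation sketch is a reasonable self-contained substitute.

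The genuine gap is your ``similarity reduction''. In Theorem~\ref{th1} that device works because the measure $\mu$ is held fixed and only the numbers $\log s_i$ appear in the final formula, so one can sandwich them between $\log\ul s_i$ and $\log\ol s_i$ at the very end. Here the situation is different: the quantities $\hat e_n(\hat\mu_i)$ and $\hat e_{n_j}(\hat\mu_j)$ are functionals of $\mu$, and $\mu$ is determined by the actual maps $S_i$. Replacing the $S_i$ by similarities produces a \emph{different} measure, so the inequality you would prove concerns a different IFS and says nothing about the original one. Consequently you are not entitled to the affine identity $\ol{\te{conv}}(S_i(E_j))=S_i(\ol{\te{conv}}(E_j))$, and the step ``write the projected optimal set as $S_i(\gb')$'' is unjustified for general hyperbolic maps.

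The paper avoids this detour entirely. Once $d(x,\ga_n)=d(x,\ga_{n,j})$ on $S_i(E_j)$, it writes directly
\[
\int \log d(S_i(x),\ga_{n,j})\,d\hat\mu_j(x)\ \geq\ \log\ul s_i+\int \log d\big(x,S_i^{-1}(\ga_{n,j})\big)\,d\hat\mu_j(x)\ \geq\ \log\ul s_i+\hat e_{n_j}(\hat\mu_j),
\]
using only that $\ul s_i>0$ makes $S_i$ injective and that $d(S_i(x),S_i(y))\geq\ul s_id(x,y)$. No similarity assumption and no convex-hull projection are needed; you should drop the reduction and argue this way.
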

\begin{proof}
Let
$\gd=\min\set {d(S_{i}(E_k), S_{j}(E_\ell)) : ki \neq \ell j \te{ with } 1\leq i, j, k, \ell \leq N}$ and let $\ga_n \in \C C_n(\hat \mu_i)$, $n\in \D N$. Then,  $\gd>0$. Now proceeding along the similar lines as \cite[Lemma~5.9]{GL2}, it can be proved that
\[\lim_{n\to \infty} \max_{x\in E_i}d(x, \ga_n)=0,\]
and so there exists a positive integer $n_0$ such that $\max_{x\in E_i} d(x, \ga_n)<\frac \gd 2$ for all $n\geq n_0$. For $1\leq k\leq N$, set $\ga_{n, k}=\set{ a \in \ga_n : W(a|\ga_n)\II S_{i} (E_k) \neq \es}$, where $W(a|\ga_n)$ is the Voronoi region generated by $a\in \ga_n$ (see \cite{GL2} for more details on Voronoi regions). Then,  $\ga_{n, k}\neq \es$ and $\ga_{n, k}\ii\ga_{n, \ell}=\es$ for $1\leq k\neq \ell\leq N$ and $n\geq n_0$. Using $\hat \mu_i=\frac {1}{p_i} \sum_{j=1}^N p_j p_{ji}\hat \mu_j\circ S_{i}^{-1}$, for all $n\geq n_0$,  we obtain
\begin{align*} \hat e_n(\hat \mu_i) &=\int \log d(x, \ga_n) d\hat \mu_i=\frac {1}{p_i} \sum_{j=1}^N p_j p_{ji} \int \log d(S_{i}(x), \ga_n) d\hat \mu_j\\
&=\frac {1}{p_i} \sum_{j=1}^N p_j p_{ji}\int \log d(S_{i}(x), \ga_{n, j}) d\hat \mu_j\\
&\geq \frac {1}{p_i} \sum_{j=1}^N p_j p_{ji} \log \ul s_{i}+\frac {1}{p_i} \sum_{j=1}^N p_j p_{ji} \int \log d(x, S_{i}^{-1}(\ga_{n, j})) d\hat \mu_j\\
&\geq \log \ul s_{i}+\frac {1}{p_i} \sum_{j=1}^N p_j p_{ji} \hat e_{n_j}(\hat \mu_j),
\end{align*}
where $n_j=\te{card}(\ga_{n, j})\geq 1$. Since $n=\te{card}(\ga_{n})=\sum_{j=1}^N n_j$, this proves the lemma.
\end{proof}
We now prove the following lemma.
\begin{lemma}   \label{lemma45}  Since $\mu=\sum_{i=1}^N p_i\hat \mu_i$, we have
\[\hat e_n(\mu) \geq \sum_{i=1}^N p_{i} \hat e_n (\hat \mu_i).\]
\end{lemma}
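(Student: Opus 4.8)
The plan is to exploit the decomposition $\mu=\sum_{i=1}^N p_i\hat\mu_i$ directly inside the defining integral for $\hat e_n(\mu)$, and then use the fact that each $\hat\mu_i$ is a \emph{probability} measure together with the monotonicity of the infimum defining $\hat e_n$. First I would fix $n\in\D N$ and take an arbitrary $\ga\sci\D R^k$ with $1\leq\te{card}(\ga)\leq n$. Since $\mu=\sum_{i=1}^N p_i\hat\mu_i$, linearity of the integral gives
\[
\int\log d(x,\ga)\,d\mu(x)=\sum_{i=1}^N p_i\int\log d(x,\ga)\,d\hat\mu_i(x).
\]
For each $i$, the set $\ga$ is still a competitor in the infimum defining $\hat e_n(\hat\mu_i)$ (it has at most $n$ points), so $\int\log d(x,\ga)\,d\hat\mu_i(x)\geq\hat e_n(\hat\mu_i)$. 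Here one uses that $\hat\mu_i$ has total mass $1$, so that the quantity $\int\log d(x,\ga)\,d\hat\mu_i$ is genuinely of the form appearing in the definition of $\hat e_n(\hat\mu_i)$ and not merely a subprobability version of it.

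Substituting these $N$ inequalities into the displayed identity yields
\[
\int\log d(x,\ga)\,d\mu(x)\geq\sum_{i=1}^N p_i\,\hat e_n(\hat\mu_i),
\]
and since the right-hand side is independent of $\ga$, I would take the infimum over all admissible $\ga$ on the left to conclude $\hat e_n(\mu)\geq\sum_{i=1}^N p_i\,\hat e_n(\hat\mu_i)$, which is exactly the assertion.

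The step that needs the most care — though it is not deep — is the legitimacy of splitting the integral and bounding each piece termwise: one must be sure that all the integrals involved are well defined (finite or $-\infty$ but not of indeterminate form), which follows because $\mu$ and each $\hat\mu_i$ are compactly supported and $\log d(x,\cdot)$ is bounded above on the relevant compact set, so only $-\infty$ is a possible degeneracy and the inequalities are preserved in that case as well. No separation hypothesis is actually required for this direction; the strong separation condition is used only in the companion estimates (Lemma~\ref{lemma461}), not here.
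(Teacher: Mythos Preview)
Your proof is correct and follows essentially the same route as the paper: decompose $\mu=\sum_i p_i\hat\mu_i$ inside the integral and bound each summand below by $\hat e_n(\hat\mu_i)$. The only cosmetic difference is that the paper picks an optimal $\ga\in\C C_n(\mu)$ at the outset (its existence is guaranteed by Lemma~\ref{lemma00}) rather than taking the infimum over all admissible $\ga$ at the end.
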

\begin{proof}
Let $\ga \in \C C_n(\mu)$. Then,
\begin{align*}
\hat e_n(\mu) &= \int \log  d(x,\ga) d\mu =\sum_{i=1}^N p_{i} \int\log  d(x, \ga) d\hat \mu_i\geq  \sum_{i=1}^N p_{i}\hat e_n(\hat \mu_i),
\end{align*}
and thus the lemma follows.
\end{proof}

The following lemma is the English version of \cite[page 23, Lemma 1]{T}.
\begin{lemma} \label{lemma1001}
If $(s_1,...,s_n)$ and $(y_1,...,y_n)$  are two positive numbers
vectors with $\sum_i y_i  \geq \sum_i  s_i$, then
$$
\sum_{i=1}^n  y_i \log(\frac{y_i}{s_i}) \geq 0,
$$
and the  equality is valid if and only if $y_i = s_i$ for all $i$.
\end{lemma}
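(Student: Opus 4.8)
The plan is to deduce everything from the scalar inequality $\log t \le t-1$, valid for every $t>0$ with equality precisely at $t=1$. This is classical; if one wishes to include a justification, set $h(t)=t-1-\log t$, note $h(1)=0$, and observe that $h'(t)=1-1/t$ vanishes only at $t=1$, where $h$ attains its (global) minimum on $(0,\infty)$, so $h\ge 0$ with $h(t)=0\iff t=1$. (The lemma is, up to the relaxed constraint, the nonnegativity of relative entropy / Gibbs' inequality.)

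First I would apply the scalar inequality with $t=s_i/y_i$, which is legitimate since all the $s_i$ and $y_i$ are strictly positive: this gives $\log(s_i/y_i)\le s_i/y_i-1$, equivalently $\log(y_i/s_i)\ge 1-s_i/y_i$. Multiplying through by $y_i>0$ yields the termwise estimate
\[
y_i\log\frac{y_i}{s_i}\;\ge\; y_i-s_i,\qquad 1\le i\le n.
\]
Summing over $i=1,\dots,n$ and invoking the hypothesis $\sum_i y_i\ge\sum_i s_i$ then gives
\[
\sum_{i=1}^n y_i\log\frac{y_i}{s_i}\;\ge\;\sum_{i=1}^n (y_i-s_i)\;\ge\;0,
\]
which is the asserted inequality.

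For the equality statement, one direction is immediate: if $y_i=s_i$ for all $i$, then each term $y_i\log(y_i/s_i)$ is $0$ and the sum vanishes. Conversely, suppose $\sum_i y_i\log(y_i/s_i)=0$. Since the sum is squeezed between $\sum_i(y_i-s_i)\ge 0$ and itself, both inequalities in the displayed chain are equalities; in particular $\sum_i\big[y_i\log(y_i/s_i)-(y_i-s_i)\big]=0$ with every bracket nonnegative, so each bracket is $0$, i.e.\ $y_i\log(y_i/s_i)=y_i-s_i$ for every $i$. Dividing the $i$th equality by $y_i$ recovers equality in $\log t\le t-1$ at $t=s_i/y_i$, which forces $s_i/y_i=1$, that is $y_i=s_i$.

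The argument is entirely elementary, so there is no real obstacle; the only points requiring a little care are that the nonnegative "mass defect" $\sum_i(y_i-s_i)$ is exactly what lets the inequality hypothesis $\sum_i y_i\ge\sum_i s_i$ (rather than equality) go through, and that the equality characterization must chase equality simultaneously through the termwise bound and the summed constraint.
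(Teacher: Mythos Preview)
Your proof is correct and follows essentially the same approach as the paper: both apply the scalar inequality $\log t\le t-1$ with $t=s_i/y_i$, multiply by $y_i$, sum, and invoke the hypothesis $\sum_i y_i\ge\sum_i s_i$. Your treatment is in fact more complete, since you carefully argue the equality case, which the paper's proof does not address explicitly.
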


\begin{proof}
We start with the elementary inequality:
$\log(x)\leq x-1\te{ for all } x>0,$
which implies the fact that
\[\sum_{i=1}^n y_i\log (\frac{s_i}{y_i})\leq \sum_{i=1}^n y_i(\frac{s_i}{y_i}-1) =\sum_{i=1}^n s_i-\sum_{i=1}^n y_i\leq 0.\]
Thus, the lemma follows.
\end{proof}
\begin{prop} \label{prop112}
Let the hyperbolic recurrent IFS satisfy the strong separation condition, and let $\ga_1$ be defined as before. Then,
\[\inf_{n\in \D N} n^{1/{\ga_1}} e_n(\mu)>0.\]
\end{prop}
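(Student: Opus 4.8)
The plan is to establish a matching lower bound to the upper bound of Proposition~\ref{prop111}, using the sub-additivity relations for $\hat e_n$ in the reverse direction (Lemma~\ref{lemma461} and Lemma~\ref{lemma45}) together with a counting estimate on a suitable finite maximal antichain defined by the probabilities $p_\gs$. Concretely, I would fix a constant $0<c<\ga_1$ and show that $\log n + c\,\hat e_n(\mu)$ is bounded below uniformly in $n$; by Proposition~\ref{prop1}(a) (or directly), this yields $\inf_n n^{1/\ga_1}e_n(\mu)>0$. As in the proof of the earlier lemma on antichains, I would work with $\gG(\ep)=\set{\gs\in W : p_{\gs^-}\geq \ep>p_\gs}$, which is a finite maximal antichain, and use the strong separation condition so that the pieces $E_\gs$, $\gs\in\gG(\ep)$, are genuinely separated (distance bounded below after scaling), making Lemma~\ref{lemma461} applicable iteratively down the antichain.

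The key steps, in order, would be: (1) Prove an antichain analogue of Lemma~\ref{lemma461}/Lemma~\ref{lemma45}: for a finite maximal antichain $\gG$ and any constant $c<\ga_1$, obtain
\[\hat e_n(\mu)\geq \frac 1 c\sum_{\gs\in\gG}p_\gs\log p_\gs+\min\Big\{\sum_{\gs\in\gG}p_\gs\hat e_{n_\gs}(\hat\mu_{\gs_{|\gs|}}) : n_\gs\geq 1,\ \sum_{\gs\in\gG}n_\gs\geq n\Big\}\]
for all but finitely many $n$, by induction on $\ell(\gG)=\max\set{|\gs|:\gs\in\gG}$, splitting $\gG$ into the maximal-length words and the rest exactly as in Lemma~\ref{lemma10}, applying Lemma~\ref{lemma461} at the last symbol and Lemma~\ref{lemma1001} (the elementary inequality $\sum y_i\log(y_i/s_i)\geq 0$) to control the error term, with the inequality now going the other way because $c<\ga_1$. (2) Apply this with $\gG=\gG(\ep_n)$ for an appropriately chosen $\ep_n$ comparable to $m/n$, using the counting bound $n\asymp |\gG(\ep_n)|$ obtained as in the preceding lemma from $p_\gs/p_{\gs^-}\geq \hat p_{\min}^2$. (3) Bound $\sum_{\gs\in\gG(\ep_n)}p_\gs\log p_\gs$ from below: since $p_{\gs^-}\geq\ep_n$ and $p_\gs\geq p_{\gs^-}\hat p_{\min}^2\geq \ep_n\hat p_{\min}^2$, each $p_\gs$ lies in a bounded ratio window, so $\sum p_\gs\log p_\gs\geq (\log\ep_n+2\log\hat p_{\min})\sum p_\gs=\log\ep_n+2\log\hat p_{\min}$. (4) Combine (1)–(3) with the fact that $\hat e_m(\hat\mu_j)$ is a fixed finite number for $m=1$ (or any fixed small $m$) to conclude $\log n+c\,\hat e_n(\mu)\geq -\text{const}$, hence the infimum is strictly positive.

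The main obstacle I anticipate is Step~(1): getting the reverse antichain inequality to close cleanly. The subtlety is that Lemma~\ref{lemma461} only holds ``for all but finitely many $n$,'' and it produces a lower bound with a $\max$ over decompositions $\sum n_j\leq n$, whereas for the lower estimate one wants to force $\sum n_\gs$ to be at least (not at most) $n$; one has to track carefully, as in Graf--Luschgy's treatment, that an optimal $n$-set for $\mu$ distributes at least a controlled number of points into each piece $E_\gs$, using the strong separation (Voronoi regions meeting $S_i(E_k)$ are disjoint for distinct $k$). Propagating the ``all but finitely many $n$'' caveat through the induction on $\ell(\gG)$, while keeping the constant $c<\ga_1$ uniform, and invoking Lemma~\ref{lemma1001} with the correct sign to absorb the terms $\sum_j p_j p_{j\gs_{|\gs|}}\log(p_j p_{j\gs_{|\gs|}}) - c\,p_{\gs_{|\gs|}}\log\ul s_{\gs_{|\gs|}}\leq 0$, is where the real bookkeeping lies; everything after that is the same volume/counting argument already used twice above.
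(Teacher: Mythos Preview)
Your proposal takes a genuinely different route from the paper. The paper does \emph{not} mirror the antichain machinery of Lemma~\ref{lemma10} and Proposition~\ref{prop111} for the lower bound; instead it runs a direct induction on $n$ using only the single-step recursion of Lemma~\ref{lemma461} together with the Gibbs inequality (Lemma~\ref{lemma1001}). One fixes $n_0$ so that Lemma~\ref{lemma461} applies for all $n\geq n_0$, sets $c=\min\{\frac{1}{\ga_1}\log n+\hat e_n(\hat\mu_i): n\leq n_0,\ 1\leq i\leq N\}>-\infty$, and proves $\hat e_n(\hat\mu_i)\geq c-\frac{1}{\ga_1}\log n$ for every $n$ by induction: in the inductive step one feeds the induction hypothesis into the unknown $n_j$'s produced by Lemma~\ref{lemma461}, and Lemma~\ref{lemma1001} (with $y_j=mp_jp_{ji}/p_i$, $s_j=n_j$) gives $\frac{1}{p_i}\sum_j p_jp_{ji}\log(n_j/m)\leq \frac{1}{p_i}\sum_j p_jp_{ji}\log(p_jp_{ji}/p_i)$. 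After summing against $p_i$ via Lemma~\ref{lemma45}, the residual term equals $\sum_ip_i\log\ul s_i-\frac{1}{\ga_1}\sum_{i,j}p_jp_{ji}\log p_{ji}=0$ by the very definition of $\ga_1$, so the induction closes at $\ga_1$ exactly. This completely sidesteps the ``all but finitely many $n$'' propagation through growing antichains that you correctly flag as the main obstacle.

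Beyond that obstacle, your outline has two concrete gaps. The displayed inequality in Step~(1) is vacuous as written: minimizing $\sum_\gs p_\gs\hat e_{n_\gs}(\hat\mu_{\gs_{|\gs|}})$ over $n_\gs\geq 1$, $\sum n_\gs\geq n$ yields $-\infty$ (send one $n_\gs\to\infty$). What iterating Lemma~\ref{lemma461} actually delivers is the bound for one \emph{specific} decomposition with $\sum n_\gs=n$ coming from the Voronoi partition, so in Step~(4) you cannot simply substitute $n_\gs=m$; you must bound $\sum_\gs p_\gs\hat e_{n_\gs}$ for that unknown tuple, which forces you back to a Gibbs-type estimate---essentially the paper's one-step argument unrolled to depth $\ell(\gG)$ rather than depth~$1$. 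Moreover, the reverse of \eqref{eq2342} that your Step~(1) induction would need, namely $\sum_j p_jp_{j\gs_{|\gs|}}\log(p_jp_{j\gs_{|\gs|}})-c\,p_{\gs_{|\gs|}}\log\ul s_{\gs_{|\gs|}}\leq 0$, holds on average over $i=\gs_{|\gs|}$ but need not hold for each individual terminal letter, so the induction on $\ell(\gG)$ does not close as stated. Finally, proving $\log n+c\,\hat e_n(\mu)$ bounded below for each $c<\ga_1$ yields only $\ul D(\mu)\geq\ga_1$; it does not give $\inf_n n^{1/\ga_1}e_n(\mu)>0$, since your lower constant may blow up as $c\uparrow\ga_1$. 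The paper avoids this by working at $\ga_1$ itself.
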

\begin{proof} The proposition will be proved if we can prove that
\[\inf_{n\in \D N} (\log n + {\ga_1}\hat e_n(\mu))>-\infty.\]
By Lemma~\ref{lemma461}, there is an $n_0 \in \D N$, such that
\[\hat e_n(\hat \mu_i) \geq \log \ul s_{i}+\min\left \{\frac 1{p_i} \sum_{j=1}^Np_j p_{ji}\hat e_{n_j}(\hat \mu_j) : n_j\geq 1, \sum_{j=1}^N n_j\leq n\right\},\]
 for all $1\leq i\leq N$ and all $n\geq n_0$. Since $\hat e_n(\hat \mu_i)>-\infty$ for all $n\in \D N$, we have
 $$c=\min \Big\{\frac 1 {\ga_1} \log n+\hat e_n(\hat \mu_i) : n\leq n_0\Big\}>-\infty.$$ By induction, we now prove that
\[\hat e_n(\hat \mu_i) \geq c-\frac 1 {\ga_1} \log n,\]
for all $n\in \D N$. For $m\leq n_0$, this is true by the definition of $c$. Let $m>n_0$, and assume that the inequality holds for all $n<m$. Then,
\begin{align*}
\hat e_m(\hat \mu_i) &\geq \log \ul s_{i}+\min\left\{\frac 1{p_i} \sum_{j=1}^Np_j p_{ji}\hat e_{n_j}(\hat \mu_j) : n_j\geq 1, \sum_{j=1}^N n_j\leq m\right\}\\
&\geq \log \ul s_{i}+\min\left \{\frac 1{p_i} \sum_{j=1}^Np_j p_{ji} c-\frac 1 {\ga_1} \frac 1{p_i} \sum_{j=1}^Np_j p_{ji} \log n_j : n_j\geq 1, \sum_{j=1}^N n_j\leq m\right \}\\
&=\log \ul s_{i}+c-\frac 1 {\ga_1}\log m -\frac 1 {\ga_1} \max \left
\{ \frac 1{p_i} \sum_{j=1}^Np_j p_{ji} \log \frac {n_j}{m} : n_j\geq
1, \sum_{j=1}^N n_j\leq m\right\}.
\end{align*}
Using Lemma~\ref{lemma1001} and the fact that $\frac 1{p_i} \sum_{j=1}^Np_j p_{ji}=1$,
we obtain
\[\frac 1{p_i} \sum_{j=1}^Np_j p_{ji}\log \frac{n_j}{m} \leq \frac 1{p_i} \sum_{j=1}^Np_j p_{ji} \log \Big(\frac{p_j p_{ji}}{p_i}\Big),\]
for all $n_j\geq 1$ with $\sum_{j=1}^N n_j\leq m$. Thus we have,
\[\hat e_m(\hat \mu_i) \geq\log \ul s_{i}+c-\frac 1 {\ga_1}\log m -\frac 1 {\ga_1} \frac 1{p_i} \sum_{j=1}^Np_j p_{ji} \log \Big(\frac{p_j p_{ji}}{p_i}\Big),\]
which by Lemma~\ref{lemma45} yields
\begin{equation} \label{eq000}\hat e_m(\mu) \geq \sum_{i=1}^N p_i\hat e_m(\hat \mu_i) \geq   \sum_{i=1}^N p_i \log \ul s_{i}+c-\frac 1 {\ga_1}\log m-\frac 1 {\ga_1} \sum_{i, j=1}^Np_j p_{ji}\log \Big(\frac{p_j p_{ji}}{p_i}\Big).\end{equation}
Notice that
\begin{align*}
&\sum_{i, j=1}^N p_jp_{ji}\log \Big(\frac{p_j p_{ji}}{p_i}\Big)=\sum_{i, j=1}^N p_jp_{ji}\log p_{ji}+ \sum_{i, j=1}^N p_jp_{ji} \log p_j -\sum_{i, j=1}^N p_jp_{ji}\log p_i\\
&=\sum_{i, j=1}^N p_jp_{ji}\log p_{ji}+ \sum_{j=1}^N p_j\log p_j -\sum_{i=1}^N p_i\log p_i=\sum_{i, j=1}^N p_jp_{ji}\log p_{ji},
\end{align*}
and thus,
\begin{equation} \label{eq0001} \frac 1 \ga_1 \sum_{i, j=1}^N p_jp_{ji}\log \Big(\frac{p_j p_{ji}}{p_i}\Big)=\sum_{i=1}^Np_i\log \ul s_i.\end{equation}
Hence, by \eqref{eq000}  and \eqref{eq0001}, we have
\[\hat e_m(\mu) \geq c-\frac 1 {\ga_1}\log m.\]
This implies
\[\inf_{n\in \D N} (\log n+{\ga_1}\hat e_n(\mu))\geq c{\ga_1}>-\infty,\]
and hence the proposition is proved.
\end{proof}

\nd \tbf{Proof of Theorem~\ref{th2}.} Proposition~\ref{prop111} tells us that  $\limsup_{n\to \infty}  n^{1/C} e_n(\mu)<\infty$, which by Proposition~\ref{prop1} implies that $\ol D(\mu) \leq C$. Since $C>\ga_2$ is arbitrary, we have $\ol D(\mu) \leq \ga_2$.  Proposition~\ref{prop112} tells that $\liminf_{n\to \infty} n^{1/{\ga_1}} e_n(\mu)>0$, which by Proposition~\ref{prop1} implies $\ul D(\mu)\geq \ga_1$. Thus the proof of Theorem~\ref{th2} is complete.
\qed

\begin{remark}
Let $\ga_2$ be the upper bound for
the upper local dimension of the probability measure $\mu$ generated
by the hyperbolic recurrent IFS $\set{X; S_{i}, p_{ij} : 1\leq i,
j\leq N}.$  Then, the following problem remains open:
\[\te{Is } \limsup_{n\to \infty} n^{1/{\ga_2}} e_n(\mu)<+\infty ?\]
\end{remark}

\subsection*{Acknowledgement} The authors are grateful to the referees for their valuable comments and suggestions.

\end{document}